\documentclass[12pt]{amsart}
\usepackage{amsmath,amssymb,amsthm,bm,bbm}
\usepackage{graphicx,enumerate}
\usepackage{layout}

\textheight=24cm \textwidth=16cm \headsep=1.0cm \hoffset=-2.0cm \voffset=-2.0cm

\theoremstyle{plain}
\newtheorem{theorem}{Theorem}[section]
\newtheorem{lemma}[theorem]{Lemma}

\theoremstyle{definition}
\newtheorem{definition}[theorem]{Definition}
\newtheorem{example}[theorem]{Example}
\newtheorem{remark}[theorem]{Remark}

\newcommand{\bZ}{\mathbbm{Z}}\newcommand{\bQ}{\mathbbm{Q}}
\newcommand{\bC}{\mathbbm{C}}
\newcommand{\Br}{\mathrm{Br}}
\newcommand{\of}{\overline{f}}
\newcommand{\oh}{\overline{h}}

\numberwithin{equation}{section}

\title[Unramified Brauer Groups for Groups of Order $p^5$]
{\Large Unramified Brauer Groups\vspace*{2mm}\\ for Groups of Order $p^5$}





\begin{document}
\maketitle
\begin{center}
\begin{tabular}{lll}
Akinari Hoshi& &Ming-chang Kang\\
Department of Mathematics & &Department of Mathematics and\\
Rikkyo University & and\hspace*{3mm} &Taida Institute of Mathematical Sciences\\
Tokyo, Japan & &National Taiwan University\\
E-mail: \texttt{hoshi@rikkyo.ac.jp}&  &Taipei, Taiwan\\
 & & E-mail: \texttt{kang@math.ntu.edu.tw}
\end{tabular}
\end{center}

\renewcommand{\thefootnote}{\fnsymbol{footnote}}

\footnote[0]{2010 {\it Mathematics Subject Classification.}
Primary 13A50, 14E08, 14M20, 12F12.

{\it Key words and phrases.}
Noether's problem, rationality problem, unramified Brauer groups,
Bogomolov multipliers, rationality, retract rationality.

The first-named author was partially supported by KAKENHI (22740028).
Both authors were partially supported by National Center for
Theoretic Sciences (Taipei office). The work of this paper was
finished while the first-named author visited National Center for
Theoretic Sciences (Taipei).}

\renewcommand{\thefootnote}{\arabic{footnote}}

\bigskip
\noindent Abstract. Let $k$ be any field, $G$ be a finite group
acting on the rational function field $k(x_g : g\in G)$ by $h\cdot
x_g=x_{hg}$ for any $h,g\in G$. Define $k(G)=k(x_g : g\in G)^G$.
Noether's problem asks whether $k(G)$ is rational (= purely
transcendental) over $k$. It is known that, if $\bC(G)$ is
rational over $\bC$, then $B_0(G)=0$ where $B_0(G)$ is the
unramified Brauer group of $\bC(G)$ over $\bC$. Bogomolov showed
that, if $G$ is a $p$-group of order $p^5$, then $B_0(G)=0$. This
result was disproved by Moravec for $p=3,5,7$ by computer
computing. We will give a theoretic proof of the following theorem
(i.e. by the traditional bare-hand proof without using computers).
 Theorem. Let $p$ be any odd prime number. Then there is a group
$G$ of order $p^5$ satisfying $B_0(G)\neq 0$ and $G/[G,G] \simeq
C_p \times C_p$. In particular, $\bC(G)$ is not rational over
$\bC$.

\section{Introduction}\label{sec-intro}

Let $k$ be any field and $G$ be a finite group.
Let $G$ act on the rational function field $k(x_g : g\in G)$ by
$k$-automorphisms such that $g\cdot x_h=x_{gh}$ for any $g,h\in G$.
Denote by $k(G)$ the fixed field $k(x_g : g\in G)^G$.
Noether's problem asks whether $k(G)$ is rational (= purely transcendental) over $k$.
It is related to the inverse Galois problem,
to the existence of generic $G$-Galois extensions over $k$,
and to the existence of versal $G$-torsors over $k$-rational field extensions
\cite[33.1, page 86]{Sw, Sa1, GMS}.
Noether's problem for abelian groups was studied extensively by Swan, Voskresenskii,
Endo, Miyata and Lenstra, etc.
The reader is referred to Swan's paper for a survey of this problem \cite{Sw}.

On the other hand, just a handful of results about Noether's problem are obtained
when the groups are not abelian.
It is the case even when the group $G$ is a $p$-group.

Before stating the results of Noether's problem for non-abelian $p$-groups,
we recall some relevant definitions.

\begin{definition}
Let $k\subset K$ be an extension of fields. $K$ is rational over
$k$ (for short, $k$-rational) if $K$ is purely transcendental over
$k$. $K$ is stably $k$-rational if $K(y_1,\ldots,y_m)$ is rational
over $k$ for some $y_1,\ldots,y_m$ such that $y_1,\ldots,y_m$ are
algebraically independent over $K$. When $k$ is an infinite field,
$K$ is said to be retract $k$-rational if there is a $k$-algebra
$A$ contained in $K$ such that (i) $K$ is the quotient field of
$A$, (ii) there is some non-zero polynomial $f\in
k[X_1,\ldots,X_n]$, the polynomial ring over $k$, and there are
$k$-algebra homomorphisms $\varphi : A\rightarrow
k[X_1,\ldots,X_n][1/f]$ and $\psi :
k[X_1,\ldots,X_n][1/f]\rightarrow A$ satisfying
$\psi\circ\varphi=1_A$. (See \cite{Sa2, Ka} for details.) It is
not difficult to see that ``$k$-raional''$\Rightarrow$ ``stably
$k$-rational''$\Rightarrow$ ``retract $k$-rational''.
\end{definition}
\begin{definition}
Let $k\subset K$ be an extension of fields. The notion of the
unramified Brauer group of $K$ over $k$, denoted by $\Br_{v,k}(K)$
was introduced by Saltman \cite{Sa3}. By definition,
$\Br_{v,k}(K)=\bigcap_R {\rm Image} \{ \Br(R)\rightarrow \Br(K)
\}$ where $\Br(R)\rightarrow \Br(K)$ is the natural map of Brauer
groups and $R$ runs over all the discrete valuation rings $R$ such
that $k\subset R\subset K$ and $K$ is the quotient field of $R$.
\end{definition}
\begin{lemma}[Saltman {\cite{Sa3, Sa4}}]\label{lemSa}
If $k$ is an infinite field and $K$ is retract $k$-rational, then the natural
map $\Br(k)\rightarrow \Br_{v,k}(K)$ is an isomorphism.
In particular, if $k$ is an algebraically closed field and $K$ is retract
$k$-rational, then $\Br_{v,k}(K)=0$.
\end{lemma}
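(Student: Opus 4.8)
The plan is to bootstrap from the purely transcendental case. Write the retract data as $\varphi\colon A\to R_0$ and $\psi\colon R_0\to A$ with $\psi\circ\varphi=1_A$, where $R_0=k[X_1,\dots,X_n][1/f]$ and $K=\operatorname{Frac}(A)$. Since $\psi\varphi=1_A$, the map $\varphi$ is injective and $A=\psi(R_0)$ is a finitely generated $k$-subalgebra of $K$; extending $\varphi$ to fraction fields gives a $k$-embedding $\tilde\varphi\colon K\hookrightarrow L$, where $L=\operatorname{Frac}(R_0)=k(X_1,\dots,X_n)$ is rational over $k$. At the level of Azumaya algebras, base change along $A\xrightarrow{\varphi}R_0\xrightarrow{\psi}A$ shows that $\varphi_\ast\colon\Br(A)\to\Br(R_0)$ is split injective with retraction $\psi_\ast\colon\Br(R_0)\to\Br(A)$, and both maps commute with the structure maps out of $\Br(k)$. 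I will combine this ring-level splitting with the classical computation $\Br_{v,k}(L)=\Br(k)$ for the rational field $L$ (Faddeev/Auslander--Goldman).

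Injectivity of $\Br(k)\to\Br_{v,k}(K)$ is immediate: the composite $\Br(k)\to\Br(K)\xrightarrow{\tilde\varphi^\ast}\Br(L)$ equals the structure map $\Br(k)\to\Br(L)$, which is split injective because $L$ is rational over $k$ (specializing the $X_i$ to a $k$-point yields a retraction). The crux is surjectivity, for which I first want to realise every unramified class by an Azumaya algebra over $A$. Because $A$ is an excellent domain, its regular locus is a nonempty open subset, so after replacing $A$ by a localization $A_s$ I may assume $A$ is regular; the retract data survive this replacement (set $R_0'=R_0[1/\varphi(s)]$, note that $\psi(\varphi(s))=s$ is a unit in $A_s$, check that $\psi\varphi=1$ still holds, and observe $\operatorname{Frac}(R_0')=L$). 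For regular $A$ and $R_0$ the restriction maps $\Br(A)\hookrightarrow\Br(K)$ and $\Br(R_0)\hookrightarrow\Br(L)$ are injective (Auslander--Goldman), and purity gives $\Br(A)=\bigcap_{\operatorname{ht}\mathfrak p=1}\Br(A_{\mathfrak p})$ inside $\Br(K)$. Since each $A_{\mathfrak p}$ with $\operatorname{ht}\mathfrak p=1$ is a discrete valuation ring of $K/k$, any $\beta\in\Br_{v,k}(K)$ lies in every $\Br(A_{\mathfrak p})$, hence in $\Br(A)$.

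Now fix $\beta\in\Br_{v,k}(K)$, viewed in $\Br(A)$ by the previous step. Its restriction $\tilde\varphi^\ast\beta\in\Br(L)$ is again unramified (unramifiedness is preserved under the field extension $K\subseteq L$), so $\tilde\varphi^\ast\beta=\alpha_L$ for some $\alpha\in\Br(k)$ by $\Br_{v,k}(L)=\Br(k)$. The class $\varphi_\ast\beta\in\Br(R_0)$ and the image $\alpha_{R_0}$ of $\alpha$ both map to $\alpha_L$ under $\Br(R_0)\hookrightarrow\Br(L)$, whence $\varphi_\ast\beta=\alpha_{R_0}$ in $\Br(R_0)$. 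Applying the retraction $\psi_\ast$ and using that $\psi$ is a $k$-algebra map (so $\psi_\ast\alpha_{R_0}=\alpha_A$), I obtain $\beta=\psi_\ast\varphi_\ast\beta=\alpha_A$, i.e.\ $\beta$ is the image of $\alpha\in\Br(k)$. This gives surjectivity, hence the asserted isomorphism; the final assertion follows at once, since $\Br(k)=0$ when $k$ is algebraically closed.

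The main obstacle is the step isolating an Azumaya algebra over $A$: it rests on the purity theorem for Brauer groups of regular rings, together with the bookkeeping needed to shrink $\operatorname{Spec}A$ to its regular locus without destroying the retraction. In equal characteristic $p$ one must in addition control the $p$-primary part of $\Br$, where both purity and the Faddeev-type identity $\Br_{v,k}(L)=\Br(k)$ demand separate arguments; there I would either pass to the prime-to-$p$ component or invoke the sharper purity statements available for the specific regular rings at hand.
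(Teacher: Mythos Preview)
The paper does not supply its own proof of this lemma: it is stated with attribution to Saltman and the references \cite{Sa3,Sa4}, and the text proceeds directly to the next result. There is therefore no argument in the paper to compare your proposal against.

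That said, your sketch is broadly in the spirit of Saltman's original approach---exploit the retraction $\psi\varphi=1_A$ at the level of Azumaya algebras, reduce to the purely transcendental field $L=k(X_1,\dots,X_n)$ where the unramified Brauer group is known to equal $\Br(k)$, and then pull the answer back along $\psi$. Two technical points deserve more care than you give them. First, the assertion that ``unramifiedness is preserved under the field extension $K\subseteq L$'' is correct but not immediate from the DVR definition used in the paper: restricting a discrete valuation of $L$ to $K$ may yield a valuation that is non-discrete or trivial, so one typically argues via residue maps or a geometric model rather than naively. Second, as you yourself flag, both the purity statement $\Br(A)=\bigcap_{\operatorname{ht}\mathfrak p=1}\Br(A_{\mathfrak p})$ and the identity $\Br_{v,k}(L)=\Br(k)$ require separate handling of the $p$-primary part in positive characteristic. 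For the applications in this paper none of this matters: Lemma~\ref{lemSa} is invoked only over algebraically closed fields of characteristic prime to $|G|$, where the standard prime-to-$p$ arguments suffice.
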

\begin{theorem}[{Bogomolov, Saltman \cite[Theorem 12]{Bo, Sa4}}]\label{thBS}
Let $G$ be a finite group, $k$ be an algebraically closed field with
$\gcd\{|G|,{\rm char}\, k\}=1$.
Let $\mu$ denote the multiplicative subgroup of all roots of unity in $k$.
Then $\Br_{v,k}(k(G))$ is isomorphic to the group $B_0(G)$ defined by
\[
B_0(G)=\bigcap_A {\rm Ker}\{{\rm res}^A_G : H^2(G,\mu)\rightarrow H^2(A,\mu)\}
\]
where $A$ runs over all the bicyclic subgroups of $G$ $($a group $A$ is called
bicyclic if $A$ is either a cyclic group or a direct product of two cyclic
groups$)$.
\end{theorem}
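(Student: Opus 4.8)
The plan is to compute $\Br_{v,k}(k(G))$ through the Galois cohomology of the $G$-field $k(V)$, where $V$ is the regular representation. Let $V$ be the $k$-space on basis $\{x_g : g\in G\}$ with $h\cdot x_g=x_{hg}$, so that $K:=k(V)=k(x_g:g\in G)$, $F:=k(V)^G=k(G)$, and $K/F$ is Galois with group $G$; note $k[V]$ is a polynomial ring, hence a UFD with $k[V]^\ast=k^\ast$. (Since $\Br_{v,k}$ is a stable birational invariant and, by the no-name lemma, $k(V')^G$ is stably $k$-isomorphic to $k(G)$ for every faithful $V'$, the choice of model is immaterial.) Because $k$ is algebraically closed, $k^\ast$ is divisible, so with trivial $G$-action $k^\ast\cong\mu\oplus U$ with $U$ uniquely divisible; hence $H^i(G,k^\ast)\cong H^i(G,\mu)$ for $i\ge 1$. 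Identifying the relative Brauer group $\Br(K/F)$ with $H^2(G,K^\ast)$ produces the fundamental homomorphism $H^2(G,\mu)=H^2(G,k^\ast)\hookrightarrow H^2(G,K^\ast)=\Br(K/F)\subseteq\Br(F)$, and the whole proof consists in showing that its image contains all unramified classes and that these are cut out as $B_0(G)$.

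Next I would show every unramified class arises from $H^2(G,\mu)$. Given $\alpha\in\Br_{v,k}(F)$, its restriction $\mathrm{res}_K\alpha$ to $K$ is again unramified, since a discrete valuation ring of $K$ over $k$ restricts to one of $F$ ($K/F$ being finite); but $K=k(V)$ is purely transcendental, hence retract $k$-rational, so Lemma~\ref{lemSa} gives $\Br_{v,k}(K)=0$ and therefore $\mathrm{res}_K\alpha=0$, i.e. $\alpha\in\Br(K/F)=H^2(G,K^\ast)$. Now use $1\to k^\ast\to K^\ast\to M\to 1$, where $M=K^\ast/k^\ast$ is the group of prime divisors of $V$; as $k[V]$ is a UFD and $G$ permutes the prime divisors, $M\cong\bigoplus_D\bZ[G/Z_D]$ is a permutation $G$-module (the $Z_D$ being decomposition groups), so Shapiro's lemma yields $H^1(G,M)=0$ and $H^2(G,M)\cong\bigoplus_D H^2(Z_D,\bZ)\cong\bigoplus_D H^1(Z_D,\bQ/\bZ)$. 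The long exact sequence then reads
\[
0\to H^2(G,\mu)\to H^2(G,K^\ast)\xrightarrow{\ \partial\ }\bigoplus_D H^1(Z_D,\bQ/\bZ),
\]
and the key compatibility (Bogomolov--Saltman) is that the $D$-component of $\partial$ is exactly the residue of the Brauer class at the divisorial valuation $v_D$. Since $\alpha$ is unramified, $\partial\alpha=0$, so $\alpha$ lies in the image of $H^2(G,\mu)$; thus $\Br_{v,k}(F)\subseteq\mathrm{Image}\,H^2(G,\mu)$.

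It remains to decide which classes of $H^2(G,\mu)$ are unramified and to match the answer with $B_0(G)$. Because $\Br_{v,k}(F)$ is intrinsic, I would pass to a smooth projective $G$-equivariant model and invoke Saltman's purity, so that unramifiedness need only be tested at the divisorial valuations $v_D$. For a divisor $D$ with cyclic inertia group $I_D=\langle\tau\rangle$ and decomposition group $Z_D$, the residue of the image of $\alpha$ at $v_D$ is computed from $\mathrm{res}^{Z_D}_G\alpha$ through the connecting map attached to $1\to I_D\to Z_D\to Z_D/I_D\to 1$; unwinding this shows the residue vanishes precisely when $\mathrm{res}^{A}_G\alpha=0$ for the abelian subgroup $A=\langle\tau,\sigma\rangle$ generated by $\tau$ together with a lift $\sigma\in Z_D$ centralizing it. Conversely, every abelian subgroup of rank $\le 2$ is realized as such an $\langle\tau,\sigma\rangle$ on a suitable blow-up, so $\alpha$ is unramified if and only if $\mathrm{res}^A_G\alpha=0$ for all such $A$. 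Finally, since $H^2$ of a finite abelian group is detected on its rank-$2$ subgroups, restriction to all abelian subgroups and restriction to all bicyclic subgroups define the same kernel; hence $\Br_{v,k}(k(G))\cong B_0(G)$.

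The main obstacle is the last paragraph: establishing the precise dictionary between the geometric residue at $v_D$ and restriction of $\alpha$ to the bicyclic subgroup $\langle\tau,\sigma\rangle$, and justifying through purity on a smooth equivariant model that these divisorial conditions capture unramifiedness with respect to every discrete valuation ring in the definition of $\Br_{v,k}$. By contrast, the reduction from arbitrary abelian subgroups to bicyclic ones is routine cohomology of finite abelian groups, and the embedding $H^2(G,\mu)\hookrightarrow\Br(F)$ together with the permutation-module vanishing $H^1(G,M)=0$ is formal; it is the identification of inertia--decomposition data with the restriction maps that forms the technical heart of the argument.
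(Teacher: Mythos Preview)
The paper does not supply its own proof of this theorem: Theorem~\ref{thBS} is stated as a background result with attribution to Bogomolov \cite{Bo} and Saltman \cite{Sa4}, and is used without proof throughout the rest of the paper. So there is nothing in the paper to compare your proposal against.

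That said, your outline is a fair sketch of the strategy actually carried out in \cite{Bo} and \cite{Sa4}: pass to a faithful linear model $K=k(V)$ with $K^G=k(G)$, use the exact sequence $1\to k^\ast\to K^\ast\to M\to 1$ with $M$ a permutation $G$-module to embed $H^2(G,\mu)$ into $\Br(K/F)$, reduce unramifiedness via purity to divisorial valuations, and then translate the residue conditions into restriction to bicyclic subgroups. Your self-diagnosis of the hard step is also accurate: the genuine work lies in (a) the purity statement allowing one to test unramifiedness only at divisorial valuations on a smooth equivariant compactification, (b) the explicit identification of the residue at $v_D$ with the image of $\mathrm{res}^{Z_D}_G\alpha$ under the map coming from the cyclic inertia, and (c) Bogomolov's geometric construction showing that \emph{every} bicyclic subgroup of $G$ is realized as the decomposition group of some divisor on some blow-up, so that the bicyclic conditions are not only necessary but sufficient. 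Steps (a)--(c) are exactly what your sketch defers, and each requires a substantial argument from the original papers; the rest of your write-up (permutation-module vanishing, Shapiro, reduction from abelian to bicyclic subgroups) is indeed routine.

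One small correction: you write ``$\Br_{v,k}$ is a stable birational invariant and, by the no-name lemma, $k(V')^G$ is stably $k$-isomorphic to $k(G)$ for every faithful $V'$.'' This is true but not needed here, since you already chose $V$ to be the regular representation, for which $k(V)^G=k(G)$ on the nose. It would only be relevant if you wanted to compute on a different linear model.
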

Following Kunyavskii \cite{Ku} we will call $B_0(G)$ the Bogomolov
multiplier of $G$. Because of Theorem \ref{thBS} we will not
distinguish $B_0(G)$ and $\Br_{v,k}(k(G))$ when $k$ is
algebraically closed and $\gcd\{|G|,{\rm char}\, k\}=1$. In this
situation, $B_0(G)$ is canonically isomorphic to $\bigcap_A {\rm
Ker}\{{\rm res}^A_G : H^2(G, \bQ/\bZ)\rightarrow H^2(A,
\bQ/\bZ)\}$, i.e. we may replace the coefficient $\mu$ by
$\bQ/\bZ$ in Theorem \ref{thBS}.

Using the unramified Brauer groups, Saltman and Bogomolov are able to
establish counter-examples to Noether's problem for non-abelian $p$-groups.

\begin{theorem}\label{thSB}
Let $p$ be any prime number, $k$ be any algebraically closed field with
char $k\neq p$.

{\rm (1) (Saltman}\ \cite{Sa3}{\rm )} There is a group $G$ with
order $p^9$ such that $B_0(G)\neq 0$. In particular, $k(G)$ is not
retract $k$-rational. Thus $k(G)$ is not $k$-rational.

{\rm (2) (Bogomolov}\ \cite{Bo}{\rm )} There is a group $G$ with
order $p^6$ such that $B_0(G)\neq 0$. Thus $k(G)$ is not
$k$-rational.
\end{theorem}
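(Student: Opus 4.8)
The plan is to convert the cohomological description in Theorem~\ref{thBS} into its homological shadow. Write $G=F/R$ with $F$ free; by Hopf's formula the Schur multiplier $M(G):=H_2(G,\bZ)$ equals $(R\cap[F,F])/[F,R]$, and since $\bQ/\bZ$ is injective the universal coefficient theorem gives $H^2(G,\bQ/\bZ)\cong\operatorname{Hom}(M(G),\bQ/\bZ)$, under which the restriction maps of Theorem~\ref{thBS} become the $\bQ/\bZ$-duals of the inclusion-induced maps $H_2(A,\bZ)\to H_2(G,\bZ)$. Hence
$$B_0(G)\ \cong\ \operatorname{Hom}\!\big(M(G)/M_0(G),\ \bQ/\bZ\big),$$
where $M_0(G)\le M(G)$ is generated by the images of $H_2(A,\bZ)$ as $A$ runs over the bicyclic subgroups of $G$; equivalently, $M_0(G)$ is the image in $M(G)$ of the subgroup of $F$ generated by all commutators $[x,y]$ with $x,y\in F$ and $[x,y]\in R$. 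Thus the whole theorem reduces to producing a single group $G$ of order $p^5$ with $G/[G,G]\cong C_p\times C_p$ for which $M_0(G)\subsetneq M(G)$: then $B_0(G)\neq 0$, and since $\bC$ is algebraically closed, Lemma~\ref{lemSa} makes $\bC(G)$ non-retract-rational, hence non-stably-rational, hence non-rational.

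For the group I would take a group $G$ of order $p^5$ and nilpotency class $3$, generated by two elements $x_1,x_2$, with $x_3=[x_2,x_1]$, $x_4=[x_3,x_1]$, $x_5=[x_3,x_2]$ central, subject to power relations --- read off from the classification of groups of order $p^5$ --- chosen so that $x_1^p,x_2^p\in[G,G]$, whence $G/[G,G]=\langle\bar x_1,\bar x_2\rangle\cong C_p\times C_p$, $[G,G]=\langle x_3,x_4,x_5\rangle$ has order $p^3$, and $Z(G)=\langle x_4,x_5\rangle$ has order $p^2$. One point to isolate at the outset is that this $G$ cannot be of exponent $p$ when $p\ge 5$: the exponent-$p$ two-generated group of class $3$ and order $p^5$ corresponds, under the Lazard correspondence, to the free nilpotent Lie $\bF_p$-algebra of class $3$ on two generators, whose Bogomolov multiplier one computes to be $0$; so the $p$-th power relations are an essential feature, and they are precisely what blocks a purely Lie-theoretic shortcut and forces the hands-on analysis below. (For $p=3$ the Lazard correspondence no longer applies, and the choice of $G$ together with its power relations needs a little extra care.)

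Next I would compute $M(G)$ from a free presentation. Taking $F$ free of rank $2$ onto $G$, so that $R$ is the normal closure of $\gamma_4(F)$ together with the chosen $p$-th powers, I would present $(R\cap[F,F])/[F,R]$ explicitly by commutator calculus --- the Hall--Witt identity together with the standard expansions of $[a,b^p]$, $(ab)^p$ and $[a,b]^p$ in a group of nilpotency class $\le 4$. I expect $M(G)$ to be elementary abelian of small rank, generated by the classes of the weight-$4$ basic commutators modulo the relations imposed by the power relations of $G$; pinning down this rank precisely and uniformly in the odd prime $p$ (the prime $3$ being the delicate case) is the technical core.

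Finally I would compute $M_0(G)$ and show that it is a proper subgroup of $M(G)$. Since $G$ has class $3$ and order $p^5$, its abelian subgroups are very restricted, and I would enumerate, up to obvious normalizations, the pairs $(a,b)$ with $[a,b]=1$ and check that in each case the image of $a\wedge b$ lands in one fixed proper subgroup of $M(G)$, so that $M_0(G)$ misses a nonzero class; phrased on the cohomology side, this amounts to exhibiting one explicit class in $H^2(G,\bQ/\bZ)$ that is nonzero yet restricts to $0$ on every bicyclic subgroup of $G$. This case analysis over all commuting pairs is the genuinely computational step, and the one I expect to be the main obstacle; everything else is bookkeeping. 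Once $M(G)/M_0(G)\neq 0$ is in hand we conclude $B_0(G)=\operatorname{Hom}(M(G)/M_0(G),\bQ/\bZ)\neq 0$, and the theorem follows.
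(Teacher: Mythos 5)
Two separate problems, one about what is being proved and one about the group you chose.

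First, the statement under discussion is a quoted result: the paper gives no proof of Theorem \ref{thSB}, it cites Saltman \cite{Sa3} and Bogomolov \cite{Bo}, whose constructions produce groups of order $p^9$ and $p^6$ directly (as central extensions built for the purpose). Your proposal instead sets out to build a group of order $p^5$ with $B_0\neq 0$ and $G/[G,G]\cong C_p\times C_p$ --- that is the content of the paper's main Theorem \ref{thmain}, a strictly stronger and historically much later statement (it contradicts Bogomolov's own Lemma 4.11). Even granting such a $G_0$, you never supply the step that converts it into the groups of order $p^9$ and $p^6$ that the theorem actually asserts; one needs something like $B_0(G_0\times C_{p^k})\neq 0$, i.e. a K\"unneth-type statement or the retract-rationality argument the paper uses to prove Theorem \ref{thre}, and that bridge is absent from your write-up.

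Second, and fatally for the construction itself: you take $G$ of nilpotency class $3$, with $x_4=[x_3,x_1]$ and $x_5=[x_3,x_2]$ both central. Every group of order $p^5$ known to have $B_0\neq 0$ --- the paper's $G(1|p)$, $G_r(2|p)$, $G_r(3|p)$, Moravec's $G(3^5,i)$ for $28\leq i\leq 30$, all lying in James's isoclinism family $\Phi_{10}$ \cite{Ja} --- is of maximal class $4$, with the relation $[f_4,f_1]=[f_3,f_2]=f_5\neq 1$. That relation is exactly what makes the argument work: in Step 4 of Lemma \ref{lemf}, the commuting pair $(\of_2,\of_3)$ in $G/N$ fails to lift to a commuting pair in $G$ precisely because $[f_3,f_2]=f_5\neq 1$. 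In your class-$3$ group $x_2$ and $x_3$ already commute in $G$ itself, so that obstruction vanishes, and Moravec's classification for $p=5,7$ confirms that all groups of order $p^5$ outside the class-$4$ family have trivial $B_0$. Your side remark that the $p$-th power relations are ``the essential feature'' points in exactly the wrong direction: the paper's $G(1|p)$ satisfies $f_i^p=1$ for all $i$ and still has $B_0\neq 0$ (Theorem \ref{thB0n}); what matters is the class-$4$ commutator structure, not the power map, and since $B_0$ is invariant under isoclinism, adjusting power relations while keeping your class-$3$ commutator structure cannot produce a nontrivial $B_0$. Beyond this, the homological reformulation via Hopf's formula and commuting pairs is legitimate (it is Moravec's framework \cite{Mo}), but you only name the tools and defer both the computation of $M(G)$ and the verification $M_0(G)\subsetneq M(G)$, which is where the entire content lies; the paper's own route for Theorem \ref{thmain} avoids the Schur multiplier altogether by taking $N=\langle f_4,f_5\rangle$, showing the transgression $H^1(N,\bQ/\bZ)^G\to H^2(G/N,\bQ/\bZ)$ is not onto, and checking that $AN/N$ is cyclic for every bicyclic $A\leq G$.
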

For $p$-groups of small order, we have the following result.
\begin{theorem}[{Chu and Kang \cite{CK}}]\label{thCK}
Let $p$ be any prime number, $G$ is a $p$-group of order $\leq p^4$
and of exponent $e$.
If $k$ is a field satisfying either
{\rm (i)} {\rm char} $k=p$, or {\rm (ii)} $k$ contains a primitive
$e$-th root of unity, then $k(G)$ is $k$-rational.
\end{theorem}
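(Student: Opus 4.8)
The plan is to treat the two hypotheses separately and, in each case, to replace the unwieldy regular representation by a small faithful one. First I would invoke the standard reduction furnished by the no-name lemma: if $G$ acts faithfully and $k$-linearly on a finite-dimensional $k$-vector space $V$, then, writing the regular representation as $V_{\mathrm{reg}}=V\oplus W$, the field $k(G)=k(V_{\mathrm{reg}})^{G}$ is rational over $k(V)^{G}$, since the extra $\dim W$ coordinates can be straightened fibrewise over $k(V)^{G}$. As a purely transcendental extension of a $k$-rational field is again $k$-rational, it suffices to produce, for each isomorphism type $G$ of order $\le p^{4}$, one faithful representation $V$ with $k(V)^{G}$ rational over $k$.

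For case (i), $\mathrm{char}\,k=p$, I would argue by induction on $|G|$ using a central subgroup $C\cong C_{p}$. In suitable coordinates the generator of $C$ acts additively, by an Artin--Schreier translation $x\mapsto x+1$ on one variable; hence $k(\dots)(x)^{C}$ is obtained by adjoining $x^{p}-x$ and is rational of the same transcendence degree, while the induced $G/C$-action keeps the same affine shape. Induction (with base case the elementary abelian groups) then yields rationality; this is the characteristic-$p$ mechanism behind Kuniyoshi's theorem, and for $|G|\le p^{4}$ it is entirely explicit, so I would dispatch case (i) quickly.

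The real content is case (ii), where $k$ contains a primitive $e$-th root of unity, $e=\exp G$. Here I would exploit that every $p$-group is an M-group, so each irreducible constituent of a faithful representation is induced from a one-dimensional character, with all eigenvalues $e$-th roots of unity lying in $k$. Thus a faithful \emph{monomial} representation $V$ is defined over $k$: in suitable coordinates $G$ acts on $k(V)=k(x_{1},\dots,x_{d})$ by permuting the variables up to scalars in $k^{\times}$. I would compute $k(V)^{G}$ in two stages — first taking invariants under a normal abelian subgroup $A$ whose action is diagonal by roots of unity, which yields a field of Laurent monomials and hence is $k$-rational, and then under the quotient $G/A$ (of order $p$ or $p^{2}$), which acts by a monomial automorphism of that rational field. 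The abelian types $G$ are covered at once by Fischer's theorem.

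The decisive obstacle is this last step. After clearing $A$, the residual action of the small quotient $G/A$ on the Laurent-monomial field is genuinely twisted — the relation $\sigma^{p}\in A$ forces the top coordinate to be inverted or scaled — and monomial actions are not rational in general. For $|G|\le p^{4}$ one must verify rationality for each of the finitely many types: the two non-abelian groups of order $p^{3}$ and, for $p$ odd, the ten non-abelian groups of order $p^{4}$ (with $p=2$ handled by the analogous short list). In every case I expect the computation to reduce to a single delicate lemma, namely the $k$-rationality of the fixed field of a cyclic group $\langle\sigma\rangle$ acting on $k(z_{1},\dots,z_{p-1})$ by the twisted cyclic permutation $z_{i}\mapsto z_{i+1}$ and $z_{p-1}\mapsto(z_{1}\cdots z_{p-1})^{-1}$; engineering each group into this normal form, through a judicious choice of $A$ and of the inducing character, is where the bulk of the work lies.
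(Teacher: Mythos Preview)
The paper does not prove this theorem at all: it is quoted from \cite{CK} as background, with no argument given. There is therefore nothing in the present paper to compare your proposal against.

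That said, your outline is a reasonable sketch of the actual Chu--Kang argument. The reduction via the no-name lemma to a small faithful representation is exactly their starting point; the characteristic-$p$ case is indeed disposed of by the Kuniyoshi--Miyata mechanism; and in case (ii) they do proceed by case analysis over the finitely many isomorphism types of non-abelian $p$-groups of order $p^3$ and $p^4$, reducing each to a monomial action whose fixed field is shown to be rational. Your identification of the ``delicate lemma'' --- rationality of the fixed field of the twisted cyclic permutation $z_i\mapsto z_{i+1}$, $z_{p-1}\mapsto (z_1\cdots z_{p-1})^{-1}$ --- is on target: this is precisely the workhorse (in the form of results of Hajja and Kang on monomial actions) that Chu and Kang invoke repeatedly. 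What you have written is an honest plan rather than a proof; the case-by-case engineering you allude to is genuinely the bulk of \cite{CK}, and there is no shortcut that avoids it.
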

Because of the above Theorem \ref{thSB} and Theorem \ref{thCK},
we may wonder what happens to non-abelian $p$-groups of order $p^5$.
\begin{theorem}[{Chu, Hu, Kang and Prokhorov \cite{CHKP}}]
Let $G$ be a group of order $32$ and of exponent $e$.
If $k$ is a field satisfying either {\rm (i)} {\rm char} $k=2$,
or {\rm (ii)} $k$ contains a primitive $e$-th root of unity,
then $k(G)$ is $k$-rational.
\end{theorem}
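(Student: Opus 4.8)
The plan is to treat the two field hypotheses separately and, in each case, to reduce the rationality of $k(G)$ to pieces that are already understood. I would first dispose of hypothesis (i): when $\mathrm{char}\, k = 2$ and $G$ is a $2$-group, Kuniyoshi's theorem asserts that $k(G)$ is rational over $k$, and since $|G| = 32 = 2^5$ this settles case (i) simultaneously for all groups of order $32$, with no appeal to the individual group structure.

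For hypothesis (ii) I would set up a uniform reduction before any case analysis. As $k$ contains a primitive $e$-th root of unity with $e = \exp G$, every irreducible $k$-representation of the $2$-group $G$ is absolutely irreducible, and $G$ has a faithful representation $V$ over $k$; taking a faithful $V$ inside the regular representation and writing $k[G] = V \oplus W$, the no-name lemma shows that $k(G) = k(k[G])^G$ is rational over $k(V)^G$. It therefore suffices to prove that $k(V)^G$ is rational for a conveniently small faithful $V$. Because every $p$-group is supersolvable, hence an M-group, each irreducible constituent of $V$ is induced from a linear character, so---using again that $k \ni \zeta_e$---I may realize $V$ by monomial matrices. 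Thus $G$ acts on $k(V) = k(x_1, \ldots, x_r)$ by monomial automorphisms $\sigma(x_i) = c_{i,\sigma}\, x_{\pi_\sigma(i)}$ with each $c_{i,\sigma}$ a root of unity, and the task reduces to computing a monomial invariant field.

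Next I would organize the computation of $k(V)^G$ according to the group structure. For the seven abelian groups of order $32$, $V$ splits into linear characters and $k(V)^G$ is rational by Fischer's theorem. For the forty-four non-abelian groups I would use that every group of order $p^5$ is metabelian (by the bound $|G| \ge p^{2^d - 1}$ for derived length $d$, here forcing $d \le 2$): each such $G$ has an abelian normal subgroup $A$, and in the large majority of cases one with $G/A$ cyclic. The engine is a linearization lemma of Hajja--Kang type: if $G$ acts on a tower $L(y_1, \ldots, y_n)$ with $L$ a $G$-stable subfield on which $G$ acts faithfully, with each $\sigma(y_j)$ affine in $y_1, \ldots, y_n$ over $L$, and if $L^G$ is rational over $k$, then $L(y_1, \ldots, y_n)^G$ is rational over $k$. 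Applying this to the layers of the monomial action coming from a chain refining $A \triangleleft G$, I would peel off one orbit of variables at a time, reducing at each step to a smaller monomial problem and ultimately to an abelian one covered by Fischer, thereby establishing rationality of $k(V)^G$ group by group.

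The principal obstacle is the case analysis itself. The scheme above---no-name reduction, monomial realization, layered linearization down to Fischer---is uniform in spirit but not automatic: for each non-abelian group one must choose $V$, a generating set, and a succession of substitutions making the action affine at every layer, and the delicate point is to verify that such an affinization exists at all. This is exactly where the commutator structure enters. The maximal-class groups $D_{32}$, $Q_{32}$, $SD_{32}$ are comparatively easy, since each has a cyclic subgroup of index $2$; I expect the genuinely hard subcases to be the groups with centre $C_2$ and no abelian subgroup of index $2$---in particular the two extraspecial groups of order $32$, whose minimal faithful $V$ is four-dimensional and whose monomial action does not linearize in the naive coordinates, forcing one to pass to a carefully chosen set of invariants before the lemma applies. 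That an affinization can always be arranged---equivalently, that no cohomological obstruction survives---is the real content of the theorem for order $32$, in contrast with the odd-$p$ situation established in the rest of this paper.
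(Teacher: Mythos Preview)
The paper does not contain a proof of this theorem: it is quoted from \cite{CHKP} as one of several background results motivating the study of $p$-groups of order $p^5$, and no argument is given or even sketched here. There is therefore nothing in the present paper to compare your proposal against.

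As to the proposal itself, what you have written is a strategic outline rather than a proof. The reductions you list---Kuniyoshi's theorem for the characteristic-$2$ case, the no-name lemma to pass from the regular representation to a small faithful $V$, realization of $V$ by monomial matrices since a $p$-group is an M-group, Fischer's theorem for the seven abelian groups, and a Hajja--Kang type linearization layered over a metabelian decomposition for the rest---are indeed the standard machinery employed in \cite{CHKP} and its predecessor \cite{CK}. But as you yourself say in your final paragraph, the actual content of the theorem lies in the forty-four individual case verifications, none of which you carry out. Writing that ``an affinization can always be arranged \ldots\ is the real content of the theorem'' and then stopping is not a proof; it is a restatement of what remains to be proved. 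The result in \cite{CHKP} is established precisely by executing that lengthy case-by-case computation, and nothing short of that (or an appeal to their paper) constitutes a proof.
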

\begin{theorem}\label{thBBMP}
{\rm (1)} \cite[Lemma 4.11]{Bo} If $G$ is a $p$-group with
$B_0(G)\neq 0$ and $G/[G,G]\cong C_p\times C_p$, then $p\geq 5$
and $|G|\geq p^7$.

{\rm (2)} \cite[Corollary 2.11]{BMP} If $G$ is a $p$-group of order $\leq p^5$,
then $B_0(G)=0$.
\end{theorem}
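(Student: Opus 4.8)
The plan is to prove both vanishing statements by reducing to a finite computation and then carrying it out family by family. First I would pass to the setting of Theorem \ref{thBS}: working over $\bC$, identify $B_0(G)$ with $\bigcap_A \ker\{\operatorname{res}^A_G : H^2(G,\bQ/\bZ)\to H^2(A,\bQ/\bZ)\}$, where $A$ runs over the bicyclic subgroups of $G$. The decisive reduction is that the Bogomolov multiplier $B_0$ is an isoclinism invariant of finite $p$-groups, so $B_0(G)$ depends only on the isoclinism family of $G$; combined with the classical classification of $p$-groups of order $\le p^5$ into finitely many isoclinism families — the families $\Phi_1,\dots,\Phi_{10}$ in the usual labelling, together with the few exceptional types occurring for small $p$ — this leaves only one representative per family to examine.

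For part (2), the representatives of order $\le p^4$ require nothing new: by Theorem \ref{thCK}, $\bC(G)$ is $\bC$-rational, hence retract $\bC$-rational, hence $B_0(G)=\Br_{v,\bC}(\bC(G))=0$ by Lemma \ref{lemSa}. For the representatives of order $p^5$ I would compute $B_0$ from an explicit power-commutator presentation. One convenient tool is the commutator description of the Bogomolov multiplier: $B_0(G)$ is Pontryagin-dual to $M(G)/M_0(G)$, where $M(G)=\ker(G\wedge G\to[G,G])$ is the Schur multiplier and $M_0(G)=\langle\, g\wedge h : g,h\in G,\ [g,h]=1\,\rangle$. Working out $G\wedge G$ and the subgroup $M_0(G)$ for each family $\Phi_i$ and checking $M(G)=M_0(G)$ gives $B_0(G)=0$; alternatively one computes $H^2(G,\bQ/\bZ)$ and the restriction maps to the bicyclic subgroups directly. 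The abelian family and the families whose derived subgroup is cyclic (or trivial) are immediate; the remaining few demand an explicit but bounded calculation.

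For part (1), the hypothesis $G/[G,G]\cong C_p\times C_p$ forces $G=\langle a,b\rangle$ to be $2$-generated, so $G\wedge G$ is generated by $a\wedge b$ together with the elements $g\wedge h$ having one factor in $[G,G]$; a descent along the lower central series shows that when $|G|<p^7$ (and when $p<5$) every such generator already lies in $M_0(G)$, so that $B_0(G)=0$. The main obstacle is exactly the case-by-case work hidden in the previous two sentences: the per-family computation of $M(G)/M_0(G)$ for the non-abelian groups of order $p^5$, and the analogous structural bookkeeping for $2$-generated $p$-groups, rest on long chains of commutator identities and are very sensitive to the small primes $p=2,3$ — a single mishandled family would invalidate the whole conclusion, so the computation has to be organised uniformly and checked with care. (In fact it is precisely at this step that the cited arguments break down for odd $p$, which is what the present paper exploits.)
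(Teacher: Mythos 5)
There is a fundamental problem here that no amount of care in the case-by-case computation can repair: the statement you are trying to prove is false, and the paper quotes it from the literature precisely in order to disprove it. The paper offers no proof of Theorem \ref{thBBMP}; immediately after stating it, the authors write that Moravec's computations (Theorem \ref{thMo}) disproved it, and the paper's main result, Theorem \ref{thmain}, produces for \emph{every} odd prime $p$ a group $G$ of order $p^5$ with $B_0(G)\neq 0$ and $G/[G,G]\cong C_p\times C_p$ --- contradicting part (2) outright, and contradicting part (1) on both counts (the examples exist for $p=3$ and have order $p^5<p^7$). So a referee comparing your proposal to ``the paper's own proof'' finds that no such proof exists, by design.

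Concretely, the place where your plan breaks is the step you flag only parenthetically: the claim that $M(G)=M_0(G)$ for every isoclinism family of groups of order $p^5$. Your reduction to isoclinism families is sound (Moravec did prove that $B_0$ is an isoclinism invariant, and the paper's final remark confirms that the counterexamples form a single isoclinism class, namely $\Phi_{10}$ in James's notation), and the argument for order $\leq p^4$ via Theorem \ref{thCK} and Lemma \ref{lemSa} is fine. But if you carry out the computation of $M(G)/M_0(G)$ correctly for the family $\Phi_{10}$ --- e.g.\ for the group with generators $f_1,\dots,f_5$ and relations $[f_2,f_1]=f_3$, $[f_3,f_1]=f_4$, $[f_4,f_1]=[f_3,f_2]=f_5$, $[f_4,f_2]=[f_4,f_3]=1$ used in Section \ref{seproof} --- you will find $M(G)\neq M_0(G)$, i.e.\ $B_0(G)\neq 0$. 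The paper establishes this by the Hochschild--Serre argument of Lemmas \ref{lem21} and \ref{lemf}: the inflation from $H^2(G/N,\bQ/\bZ)$ with $N=\langle f_4,f_5\rangle$ has nonzero image landing inside $B_0(G)$ because every bicyclic subgroup maps to a cyclic subgroup of $G/N$. Similarly, the ``descent along the lower central series'' you invoke for part (1) cannot close up for these $2$-generated groups. The correct conclusion of your computation would therefore be the negation of the statement, not a proof of it.
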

It came as a surprise that Moravec's recent paper \cite{Mo} disproved
the above Theorem \ref{thBBMP}.
\begin{theorem}[{Moravec \cite[Section 5]{Mo}}]\label{thMo}
Let $G=G(243,i)$, $28\leq i\leq 30$, where $G(243,i)$ is the $i$-th group of
groups of order $243$ in the database of GAP.
Then $B_0(G)\neq 0$.
Moreover, if $G$ is a group of order $243$ other than $G(243,i)$ with
$28\leq i\leq 30$, then $B_0(G)=0$.
\end{theorem}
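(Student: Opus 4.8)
The plan is to produce an explicit group $G$ of order $p^5$ for each odd prime $p$, with $G/[G,G]\simeq C_p\times C_p$, and to verify directly that $B_0(G)\neq 0$. Since Theorem \ref{thBBMP} asserts the contrary, the real content is to pin down exactly where the earlier (erroneous) arguments break and to exhibit a nonzero class. Concretely, I would take $G$ to be a central extension $1\to Z\to G\to C_p\times C_p\to 1$ chosen so that the commutator pairing and the power maps conspire to make $G$ a group with $d(G)=2$ generators; the candidates suggested by Theorem \ref{thMo} are $G(243,i)$ for $28\le i\le 30$, so for general odd $p$ I would write down the analogous presentation $G=\langle a_1,a_2\mid a_1^{p}=c_1,\ a_2^{p}=c_2,\ [a_1,a_2]=b,\ \ldots\rangle$ with suitable relations among the ``depth-$\ge 2$'' generators $b,c_1,c_2,\dots$, and then check $[G,G]$, $G/[G,G]\simeq C_p\times C_p$, and $|G|=p^5$ by a short commutator-collection computation. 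The subtle point in the choice is that the extension must be \emph{non-split in a controlled way}: the class in $H^2(C_p\times C_p,Z)$ defining $G$ must restrict nontrivially to a specific cyclic or bicyclic pattern.

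Next I would compute $B_0(G)$ via Theorem \ref{thBS}, i.e.\ as $\bigcap_A \mathrm{Ker}\{\mathrm{res}^A_G:H^2(G,\bQ/\bZ)\to H^2(A,\bQ/\bZ)\}$ over bicyclic $A\le G$. The standard reduction here (due to Bogomolov) is to relate this to $H_2(G,\bZ)$ modulo the subgroup $M_0(G)$ generated by the images of $H_2(A,\bZ)$ for $A$ abelian (equivalently, bicyclic); by duality $B_0(G)$ is the Pontryagin dual of $H_2(G,\bZ)/M_0(G)$. So the plan is: (a) compute the Schur multiplier $H_2(G,\bZ)$ of the explicit $G$ — for a group of order $p^5$ with two generators this is a small finite $p$-group, computable by the Hopf formula $H_2(G,\bZ)=(R\cap[F,F])/[F,R]$ from a minimal presentation on two generators; (b) identify the subgroup generated by ``decomposable'' classes, those coming from commuting pairs of elements, i.e.\ the image of $\wedge^2$ of cyclic/abelian subgroups; (c) show the quotient is nonzero by exhibiting one class in $H_2(G,\bZ)$ that is not a sum of such decomposables. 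Step (c) is where the explicit relations of $G$ do the work: the relation pattern should force any decomposable contribution to lie in a proper subgroup of the multiplier, leaving a genuine ``non-decomposable'' commutator relation that survives.

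The main obstacle I expect is step (b)–(c): controlling $M_0(G)$, the subgroup of $H_2(G,\bZ)$ generated by images of Schur multipliers of abelian subgroups. One has to enumerate (up to the relevant conjugacy/inclusion) all maximal bicyclic subgroups $A$ of $G$ — for $|G|=p^5$ these are not numerous, but getting a clean description requires knowing the subgroup lattice, or at least the abelian subgroups containing the center — and then compute each restriction map $H_2(A,\bZ)\to H_2(G,\bZ)$ explicitly in terms of generators. The cleanest route is Bogomolov's reformulation in terms of the second cohomology of the \emph{Lie algebra} (or the associated graded) of $G$: $B_0(G)$ is related to $\ker\big(\wedge^2 V\to [G,G]\big)\big/\langle\text{relations from }2\text{-dimensional subspaces}\big\rangle$ where $V=G/[G,G]\otimes\bF_p$; since here $\dim_{\bF_p}V=2$, $\wedge^2 V$ is one-dimensional, which is exactly why Theorem \ref{thBBMP}(1) claimed $B_0=0$ — and the whole point is that \emph{bicyclic} subgroups of a $p$-group of exponent $>p$ need not be elementary abelian, so there is extra room coming from elements of order $p^2$ that the Lie-algebra heuristic misses. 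So I would be careful to work with genuine subgroups, not Lie subalgebras, and the decisive computation is to show that the unique nonzero class in the degree-$2$ part of the multiplier is \emph{not} hit by the Schur multiplier of any bicyclic subgroup, because every such subgroup, restricted to the relevant quotient, becomes cyclic and hence has trivial contribution. Once that single class is shown to survive, $B_0(G)\neq0$; by Lemma \ref{lemSa} and Theorem \ref{thBS}, $\bC(G)$ is not retract $\bC$-rational, hence not $\bC$-rational.
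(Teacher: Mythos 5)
There are two genuine gaps. First, the statement you are asked to prove has two halves, and your proposal addresses only one of them. Besides showing $B_0(G(243,i))\neq 0$ for $28\leq i\leq 30$, the theorem asserts that \emph{every other} group of order $243$ has trivial Bogomolov multiplier; your plan says nothing about how to handle those remaining sixty-odd groups. The paper disposes of them by invoking the rationality result of \cite{CHK} (every such $k(G)$ is $k$-rational when $G\not\simeq G(243,i)$, $28\leq i\leq 30$) together with Lemma \ref{lemSa}; some input of this kind is unavoidable and cannot be omitted. Second, even for the nonvanishing half, what you have written is a programme rather than a proof: the decisive steps --- computing $H_2(G,\bZ)$ by the Hopf formula, identifying the subgroup $M_0(G)$ generated by the images of $H_2(A,\bZ)$ over bicyclic $A$, and exhibiting a class that survives --- are described but never executed. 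The paper avoids this homological bookkeeping entirely by a different and more economical route: it takes the normal subgroup $N=\langle f_4,f_5\rangle\cong C_p\times C_p$ (note $N\neq[G,G]$) and uses the Hochschild--Serre five-term sequence, showing that the transgression $H^1(N,\bQ/\bZ)^G\to H^2(G/N,\bQ/\bZ)$ has image $C_p$ inside $C_p\times C_p$ (so inflation is nonzero), and that every bicyclic $A\leq G$ has $AN/N$ cyclic, so the inflated classes die on restriction to every $A$ and hence lie in $B_0(G)$. Your closing sentence (``every such subgroup, restricted to the relevant quotient, becomes cyclic'') is in fact the crux of that argument, but you neither specify the quotient nor verify the claim, and the verification (Steps 2--4 of Lemma \ref{lemf}) is where most of the work lies.

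A smaller but real error: you describe $G$ as a central extension $1\to Z\to G\to C_p\times C_p\to 1$. The groups in question have nilpotency class $4$ (the relations $[f_2,f_1]=f_3$, $[f_3,f_1]=f_4$, $[f_4,f_1]=f_5$ give a lower central series of length $4$), so $[G,G]=\langle f_3,f_4,f_5\rangle$ is not central and $G$ is not a central extension of $G/[G,G]$. Any computation of $H_2(G,\bZ)$ or of the relevant restriction maps that starts from the class-$2$ picture will go wrong; this is also why the Lie-algebra heuristic you mention must be handled with care beyond the observation that $\wedge^2V$ is one-dimensional.
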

Moravec proves Theorem \ref{thMo} by using computer computing. No
theoretic proof is given. A file of the GAP functions and commands
for computing $B_0(G)$ can be found at Moravec's website
\verb+www.fmf.uni-lj.si/~moravec/b0g.g+. More recently, Moravec
was able to classify all groups $G$ with order $p^5$ (when $p=5$
and $p=7$) such that $B_0(G)\neq 0$ by using computers again.

The main result of this paper is the following theorem.
\begin{theorem}\label{thmain}
Let $p$ be any odd prime number. Then there is a $p$-group $G$ of
order $p^5$ such that $B_0(G)\neq 0$ and $G/[G,G] \simeq C_p
\times C_p$. In particular, if $k$ is a field with {\rm char}
$k\neq p$, then $k(G)$ is not retract $k$-rational. Thus $k(G)$ is
not $k$-rational.
\end{theorem}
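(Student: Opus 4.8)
The plan is to exhibit an explicit family of groups $G$ of order $p^5$ (one for each odd prime $p$), present each by generators and relations in a form where the commutator structure is transparent, verify that $G/[G,G]\simeq C_p\times C_p$, and then compute $B_0(G)$ by hand using Theorem \ref{thBS}. The natural candidate is a group generated by two elements $x_1,x_2$ whose lower central series has factors $C_p,C_p,C_p,C_p$ with $[G,G]$ of order $p^3$; writing $x_3=[x_1,x_2]$, $x_4=[x_3,x_1]$ (or $[x_3,x_2]$), $x_5=[x_4,x_1]$ and imposing that these are central of order $p$ with all other basic commutators trivial gives such a group, and $G^{\rm ab}$ is generated by the images of $x_1,x_2$, hence $\simeq C_p\times C_p$ provided $x_1,x_2\notin[G,G]G^p$. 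For $p=3$ this group should be one of $G(243,i)$, $28\le i\le30$ from Theorem \ref{thMo}, which is the sanity check that the family is correct.

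The computational heart is to show $B_0(G)\neq0$. I would use the Bogomolov--Saltman description: $B_0(G)=\bigcap_A\ker\{\operatorname{res}^A_G:H^2(G,\bQ/\bZ)\to H^2(A,\bQ/\bZ)\}$ over bicyclic $A\le G$. Dually, by the universal coefficient theorem and the Hopf formula, one works with $H_2(G,\bZ)=M(G)$, the Schur multiplier, and the subgroup $M_0(G)\subset M(G)$ generated by images of $H_2(A,\bZ)$ for $A$ ranging over abelian (indeed bicyclic) subgroups; then $B_0(G)^\vee\simeq M(G)/M_0(G)$. So the strategy is: (i) compute $H_2(G,\bZ)$ for the presented group, e.g. via the Hopf formula $H_2(G)=(R\cap[F,F])/[F,R]$ with $F$ free on $x_1,x_2$ and $R$ the normal closure of the relators, or via the five-term exact sequence combined with knowledge of $[G,G]$; (ii) for each bicyclic subgroup $A$, compute the image of $\wedge^2 A\to H_2(G)$ (using that for abelian $A$, $H_2(A)\simeq\wedge^2 A$); (iii) show that the sum of these images is a proper subgroup of $H_2(G)$, the nonzero quotient being $B_0(G)^\vee$. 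A cleaner equivalent, following Moravec's reinterpretation, is to use $B_0(G)\simeq (\text{certain quotient of }[G,G]\text{-related nonabelian exterior square})$; concretely one shows $M_0(G)$ is generated by the ``obvious'' commutator relations coming from pairs of commuting generators, and exhibits one class in $M(G)$ not of that form.

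The main obstacle will be step (ii)--(iii): organizing the finitely many bicyclic subgroups of $G$ and showing none of them (nor the subgroup they jointly generate in $H_2$) detects a distinguished nonzero class. Since $G$ has a 2-generated abelianization, every abelian subgroup either lies in a proper subgroup that is itself small, or is a maximal abelian subgroup; one enumerates the maximal abelian subgroups (they will be of order $p^3$ or $p^4$, containing $[G,G]$ or large parts of it) and checks that the relations they impose on $H_2(G)$ all lie in a fixed proper subspace. The key structural fact making this tractable is that a product of commutators which is a nontrivial relation in $H_2(G)$ but which cannot be ``split'' across any commuting pair gives the nonvanishing; identifying this element and proving it survives all restriction maps is the delicate bookkeeping. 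I expect to reduce everything to linear algebra over $\bF_p$ on a space of dimension equal to $\dim H_2(G,\bF_p)$, which for these groups is small (on the order of $2$ or $3$), so once the presentation and the list of bicyclic subgroups are fixed the verification is a finite, prime-independent calculation — that uniformity in $p$ being exactly what distinguishes the hand proof from Moravec's case-by-case machine computation.
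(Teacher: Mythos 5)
Your overall framework (Bogomolov--Saltman description of $B_0(G)$, dualized via the Schur multiplier to $M(G)/M_0(G)$ in Moravec's style) is legitimate, but there is a concrete gap at the very first step: the group you propose is the wrong one, and the error is precisely at the point where all the difficulty of the theorem is concentrated. If you take $x_3=[x_1,x_2]$, $x_4=[x_3,x_1]$, $x_5=[x_4,x_1]$ and impose that \emph{all other basic commutators are trivial}, then $[x_3,x_2]=[x_4,x_2]=1$, so $\langle x_2,x_3,x_4,x_5\rangle$ is an abelian normal subgroup of index $p$. A $p$-group with an abelian normal subgroup of cyclic quotient has $B_0(G)=0$ (Bogomolov \cite{Bo}), so for this group steps (ii)--(iii) of your plan cannot succeed: the bicyclic subgroups generate all of $M_0(G)=M(G)$. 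The group that actually works carries the extra relation $[x_3,x_2]=x_5=[x_4,x_1]$ (James's isoclinism family $\Phi_{10}$ rather than $\Phi_9$), and this relation is not decoration: it is exactly what guarantees that no bicyclic subgroup $A$ maps onto a noncyclic subgroup of $G/\langle x_4,x_5\rangle$, which is the mechanism behind the nonvanishing. A second existence issue you pass over: the power relations cannot all be $x_i^p=1$, since for $p=3$ a $2$-generated group of exponent $3$ has order at most $27$; one needs, e.g., $f_1^p=f_5$ for $p\ge 5$ and $f_2^3=f_4^{-1}$, $f_3^3=f_5^{-1}$ for $p=3$, and the fact that the resulting presentation defines a group of order exactly $p^5$ has to be justified (the paper cites Bender's classification).

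Beyond the choice of group, your steps (i)--(iii) are a plan rather than a proof: the computation of $H_2(G,\bZ)$, the enumeration of bicyclic subgroups, and the exhibition of a class surviving all restrictions are deferred as ``delicate bookkeeping,'' and that is where essentially all of the content lies. For comparison, the paper avoids computing $M(G)$ altogether. It takes $N=\langle f_4,f_5\rangle\cong C_p\times C_p$ and uses the Hochschild--Serre five-term sequence: since $H^1(N,\bQ/\bZ)^G\cong C_p$ while $H^2(G/N,\bQ/\bZ)\cong C_p\times C_p$ (as $G/N$ is nonabelian of order $p^3$ and exponent $p$, with $p\ge 3$), the transgression is not surjective and the inflation $\psi:H^2(G/N,\bQ/\bZ)\to H^2(G,\bQ/\bZ)$ has nonzero image; then a short case analysis shows $AN/N$ is cyclic for every bicyclic $A\le G$, so $\operatorname{res}^A_G\circ\psi$ factors through $H^2(AN/N,\bQ/\bZ)=0$ and the image of $\psi$ lands in $B_0(G)$. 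If you want to salvage your route, you should adopt the corrected presentation and expect the surviving class in $M(G)$ to be the one dual to this inflated class; but as written the proposal neither constructs the right group nor performs the computation that would detect a nontrivial element.
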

As a corollary of the above theorem, we record the following result.
\begin{theorem}\label{thre}
Let $n$ be a positive integer and $k$ be a field with
$\gcd\{|G|,{\rm char}\, k\}=1$. If $2^6\mid n$ or $p^5\mid n$ for
some odd prime number $p$, then there is a group $G$ of order $n$
such that $B_0(G)\neq 0$. In particular, $k(G)$ is not stably
$k$-rational; when $k$ is an infinite field, $k(G)$ is not even
retract $k$-rational.
\end{theorem}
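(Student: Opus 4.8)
The plan is to produce, for the given $n$, a group of order $n$ containing as a direct factor a group already known to have non-trivial Bogomolov multiplier, and then to transport non-vanishing of $B_{0}$ from that factor to the whole group by an elementary monotonicity property, which I would isolate first as a lemma: if $\pi\colon G\twoheadrightarrow Q$ is a surjection of finite groups admitting a group-theoretic section $s\colon Q\to G$, then the inflation map $\pi^{*}\colon H^{2}(Q,\bQ/\bZ)\to H^{2}(G,\bQ/\bZ)$ restricts to an injection $B_{0}(Q)\hookrightarrow B_{0}(G)$; in particular $B_{0}(Q)\neq 0$ implies $B_{0}(G)\neq 0$. Injectivity of $\pi^{*}$ is immediate from $s^{*}\circ\pi^{*}=\mathrm{id}$. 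For the inclusion $\pi^{*}(B_{0}(Q))\subseteq B_{0}(G)$, let $A\le G$ be a bicyclic subgroup and set $\bar A=\pi(A)\le Q$; a quotient of a cyclic group is cyclic and a quotient of a product of two cyclic groups is generated by two elements, hence again bicyclic, so $\bar A$ is bicyclic, and the functoriality identity $\mathrm{res}^{A}_{G}\circ\pi^{*}=(\pi|_{A})^{*}\circ\mathrm{res}^{\bar A}_{Q}$ gives $\mathrm{res}^{A}_{G}(\pi^{*}\alpha)=0$ for every $\alpha\in B_{0}(Q)$. Applying this with $\pi$ the projection of a direct product yields $B_{0}(G_{1})\hookrightarrow B_{0}(G_{1}\times G_{2})$ for any finite groups $G_{1},G_{2}$ (in fact $B_{0}(G_{1}\times G_{2})\cong B_{0}(G_{1})\oplus B_{0}(G_{2})$, but only the injection is needed here).

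Next, given $n$ as in the statement, I would take $G_{0}$ to be a group of order $p^{5}$ with $B_{0}(G_{0})\neq 0$ furnished by Theorem \ref{thmain} in the case $p^{5}\mid n$ for some odd prime $p$, and a group of order $2^{6}$ with $B_{0}(G_{0})\neq 0$ furnished by Theorem \ref{thSB}(2) in the case $2^{6}\mid n$. Since $|G_{0}|$ divides $n$, I would set $G=G_{0}\times C_{n/|G_{0}|}$, a group of order $n$; as the first projection $G\to G_{0}$ is split, the lemma gives $B_{0}(G)\neq 0$. The rationality assertions then follow from $B_{0}(G)\neq 0$ exactly as in the proof of Theorem \ref{thmain}: passing to an algebraic closure $\bar k$ of $k$ (legitimate because $\gcd\{|G|,{\rm char}\,k\}=1$), Theorem \ref{thBS} identifies $\Br_{v,\bar k}(\bar k(G))$ with $B_{0}(G)\neq 0$, so $\bar k(G)$ is not retract $\bar k$-rational by Lemma \ref{lemSa}; a standard base-change argument then shows that $k(G)$ is not stably $k$-rational and, when $k$ is infinite, not even retract $k$-rational.

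I do not anticipate a real obstacle: the corollary is essentially a repackaging of Theorem \ref{thmain} together with Bogomolov's order-$p^{6}$ example. The only steps needing any care are the observation that forming a quotient preserves bicyclicity — which is precisely what makes the inflation map compatible with $B_{0}$ — and the descent of stable and retract rationality along the extension $k\subset\bar k$, both of which are routine.
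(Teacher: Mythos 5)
Your proposal is correct, and it builds the same group as the paper: $G=G_0\times C_{n/|G_0|}$ with $G_0$ of order $p^5$ supplied by Theorem~\ref{thmain}, or of order $2^6$ supplied by Theorem~\ref{thSB}. Where you genuinely diverge is in how the conclusion is extracted from the direct-product structure. The paper never computes $B_0(G)$ for the product: it argues by contradiction, assuming $k(G)$ is stably (or retract) rational, passing to $\overline{k}$, and invoking Saltman's theorem (\cite[Theorem 1.5]{Sa1}, \cite[Lemma 3.4]{Ka}) that retract rationality of $\overline{k}(G_0\times C_m)$ forces retract rationality of the factor $\overline{k}(G_0)$, which contradicts $B_0(G_0)\neq 0$ via Lemma~\ref{lemSa}. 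You instead prove a purely cohomological monotonicity lemma --- for a split surjection $\pi\colon G\twoheadrightarrow Q$, inflation embeds $B_0(Q)$ into $B_0(G)$, the two key points being that a quotient of a bicyclic group is bicyclic (a two-generated abelian group is a product of at most two cyclic groups) and that restriction commutes with the maps induced by $\pi$ --- and apply it to the projection $G\to G_0$. That lemma and its proof are sound. Your route has the advantage of actually establishing the assertion ``$B_0(G)\neq 0$'' that appears in the statement, which the paper's contradiction argument does not literally verify (non-rationality of $k(G)$ does not by itself imply $B_0(G)\neq 0$); the paper's route has the advantage of quoting an off-the-shelf descent result instead of proving a new lemma. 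Both proofs then handle the base change $k\subset\overline{k}$ in the same routine way.
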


In Section \ref{seMore} we will produce more groups $G$ with order
$p^5$ and $B_0(G)\neq 0$ for $p \geq 5$. In particular, we find
$6$ such groups for $p=5$ and $7$; these groups are exactly those
groups $G$ with non-trivial $B_0(G)$ obtained by Moravec using his
computer program.

Finally we remark that recently Chu, Hu and Kang prove the
following theorem : Let $G$ be a group of order $243$ and of
exponent $e$. If $k$ is a field containing a primitive $e$-th root
of unity and $G$ is not isomorphic to $G(243,i)$ for $28\leq i\leq
30$, then $k(G)$ is $k$-rational \cite{CHK}.

The proof of Theorem \ref{thmain} is divided into two parts, $p=3$
and $p\geq 5$. According to the computation of Moravec, the groups
$G$ of order $p^5$ with $B_0(G)\neq 0$ for $p=5$ and $p=7$ look
very similar. But their outlooks are not similar to those of
$G(3^5,i)$ for $28\leq i\leq 30$. Thus for $p\geq 5$ we define a
group $G$ of order $p^5$ in a uniform way, i.e. independent of the
value of the prime number $p$. Then we try to prove $B_0(G)\neq 0$
for this group $G$ (and also for $G=G(3^5,i), 28\leq i\leq 30$).

The idea of proving $B_0(G)\neq 0$ goes as follows. Take a
suitable normal subgroup $N$ of $G$. Consider the $5$-term exact
sequence of Hochschild and Serre \cite{HS},
\[
0\rightarrow H^1(G/N,\bQ/\bZ)\rightarrow
H^1(G,\bQ/\bZ) \rightarrow
H^1(N,\bQ/\bZ)^G\rightarrow
H^2(G/N,\bQ/\bZ)\stackrel{\psi}{\rightarrow}H^2(G,\bQ/\bZ)
\]
where $\psi$ is the inflation map. We will show that the image of
$\psi$ is non-zero and is contained in $B_0(G)$. Thus $B_0(G)$ is
non-trivial.

The above method can be applied to other groups of order $p^5$ if $p\geq 5$.
In particular, if $p\equiv 1\pmod{12}$, we find at least $8$ distinct groups
$G$ with $B_0(G)\neq 0$; when $p\equiv 5$ or $7 \pmod{12}$, at least $6$ such groups;
when $p\equiv 11\pmod{12}$, at least $4$ such groups.
See Section \ref{seMore} for details.

The paper is organized as follows. In Section \ref{setwolem}, two
lemmas are proved. The proof of Theorem \ref{thmain} is given in
Section \ref{seproof}. Theorem \ref{thre} follows as an
application of Theorem \ref{thmain} and Theorem \ref{thSB}. We
emphasize that the proof of Theorem \ref{thmain} does not rely on
computers; only the bare hands are sufficient to finish the proof
of Theorem \ref{thmain}, contrasting with Moravec's
computer-relying proof which is valid only for small prime numbers
\cite{Mo}. In Section \ref{seMore}, we prove more groups $G$ with
$B_0(G)\neq 0$ when $p\equiv 1,5,7,11\pmod{12}$.

Standing notations. Throughout this paper, $k$ is a field,
$\zeta_n$ denotes a primitive $n$-th root of unity. Whenever we
write $\zeta_n\in k$, it is understood that either char $k=0$ or
char $k=l>0$ with $l\,{\not{\mid}}\ n$. When $k$ is an
algebraically closed field, $\mu$ denotes the set of all roots of
unity, i.e. $\mu=\{\alpha\in k\setminus\{0\} : \alpha^n=1\ {\rm
for\ some\ integer} \ n\ {\rm depending\ on}\ \alpha\}$. If $G$ is
a group, $Z(G)$ and $[G,G]$ denote the center and the commutator
subgroup of the group $G$ respectively. The exponent of a group
$G$ is defined as ${\rm lcm}\{{\rm ord}(g) : g\in G\}$ where ${\rm
ord}(g)$ is the order of the element $g$. We denote by $C_n$ the
cyclic group of order $n$. A group $G$ is called a bicyclic group
if it is either a cyclic group or a direct product of two cyclic
groups. When we write cohomology groups $H^q(G,\mu)$ or
$H^q(G,\bQ/\bZ)$, it is understood that $\mu$ and $\bQ/\bZ$ are
trivial $G$-modules.

For emphasis, recall the definition of $k(G)$ which was defined in
the first paragraph of this section. The group $G(n,i)$ is the
$i$-th group among the groups of order $n$ in GAP. The version of
GAP we refer to in this paper is GAP4, Version: 4.4.12 \cite{GAP}.

\bigskip

\section{Two lemmas}\label{setwolem}

Throughout this paper, when $G$ is a group, $g,h\in G$, we will denote by
$[g,h]$ the element $g^{-1}h^{-1}gh$.
When $N$ is a normal subgroup of $G$ and $g\in G$, the element $\overline{g}\in G/N$
denotes the image of $g$ in the quotient group $G/N$.
\begin{lemma}\label{lem21}
Let $G$ be a finite group, $N$ be a normal subgroup of $G$.
Assume that {\rm (i)} ${\rm tr} : H^1(N,\bQ/\bZ)^G\rightarrow H^2(G/N,\bQ/\bZ)$
is not surjective where {\rm tr} is the transgression map, and {\rm (ii)} for any
bicyclic subgroup $A$ of $G$, the group $AN/N$ is a cyclic subgroup of $G/N$.
Then $B_0(G)\neq 0$.
\end{lemma}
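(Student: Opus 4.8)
The plan is to use the Hochschild–Serre five-term exact sequence for the normal subgroup $N \trianglelefteq G$ with trivial coefficients $\bQ/\bZ$, namely
\[
0\to H^1(G/N,\bQ/\bZ)\to H^1(G,\bQ/\bZ)\to H^1(N,\bQ/\bZ)^G \xrightarrow{\ {\rm tr}\ } H^2(G/N,\bQ/\bZ)\xrightarrow{\ \psi\ } H^2(G,\bQ/\bZ),
\]
where $\psi$ is the inflation map. Assumption (i) says that ${\rm tr}$ is not surjective, so by exactness $\psi$ has non-trivial image; pick $0\ne \alpha\in H^2(G/N,\bQ/\bZ)$ with $\psi(\alpha)\ne 0$ in $H^2(G,\bQ/\bZ)$. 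The whole task then reduces to showing $\psi(\alpha)\in B_0(G)$, i.e. that ${\rm res}^A_G(\psi(\alpha))=0$ for every bicyclic subgroup $A\le G$.

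The key step is a compatibility of inflation and restriction. For a bicyclic subgroup $A\le G$, write $\overline{A}=AN/N\le G/N$, and note the commutative square relating $\psi={\rm inf}^{G/N}_G$, the restriction ${\rm res}^A_G$, the restriction ${\rm res}^{\overline A}_{G/N}$, and the inflation ${\rm inf}^{\overline A}_A\colon H^2(\overline A,\bQ/\bZ)\to H^2(A,\bQ/\bZ)$ associated to the surjection $A\twoheadrightarrow \overline A=A/(A\cap N)$. This square gives
\[
{\rm res}^A_G(\psi(\alpha)) = {\rm inf}^{\overline A}_A\bigl({\rm res}^{\overline A}_{G/N}(\alpha)\bigr).
\]
Now invoke assumption (ii): $\overline A=AN/N$ is cyclic. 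Since $H^2$ of a cyclic group with trivial coefficients in $\bQ/\bZ$ vanishes (equivalently, $H^2(C,\bQ/\bZ)\cong H^1(C,\bQ/\bZ)$ is the Schur multiplier of a cyclic group, which is $0$), we get ${\rm res}^{\overline A}_{G/N}(\alpha)=0$ in $H^2(\overline A,\bQ/\bZ)$, hence ${\rm res}^A_G(\psi(\alpha))=0$. As $A$ was an arbitrary bicyclic subgroup, $\psi(\alpha)\in B_0(G)$ (using Theorem \ref{thBS} with coefficients $\bQ/\bZ$), and since $\psi(\alpha)\ne 0$ we conclude $B_0(G)\ne 0$.

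The only point requiring a little care—and the place I expect the main (minor) obstacle—is justifying the commutative square above, because $\overline A$ is $AN/N$ rather than $A/N$, so one is comparing the inflation through $G\twoheadrightarrow G/N$ restricted to $A$ with the inflation through $A\twoheadrightarrow A/(A\cap N)$; the isomorphism $A/(A\cap N)\cong AN/N$ is the standard one, and functoriality of inflation–restriction in the Lyndon–Hochschild–Serre setup (naturality of the spectral sequence under the pair of maps $(A\hookrightarrow G,\ \overline A\hookrightarrow G/N)$) yields the identity. Everything else is formal. Note that assumption (ii) is exactly tailored so that the target group $H^2(\overline A,\bQ/\bZ)$ is trivial; without cyclicity of $\overline A$ the argument breaks down.
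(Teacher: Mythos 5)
Your proposal is correct and follows essentially the same route as the paper: the five-term Hochschild--Serre sequence gives a non-zero image of inflation, and the inflation--restriction compatibility through $H^2(AN/N,\bQ/\bZ)\cong H^2(A/A\cap N,\bQ/\bZ)=0$ (since $AN/N$ is cyclic) shows that image lies in $B_0(G)$. The commutative square you flag as the delicate point is exactly the diagram the paper writes down, with the isomorphism $A/(A\cap N)\cong AN/N$ handled the same way.
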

\begin{proof}
Consider the Hochschild-Serre $5$-term exact sequence
\[
0\rightarrow H^1(G/N,\bQ/\bZ)\rightarrow
H^1(G,\bQ/\bZ) \rightarrow
H^1(N,\bQ/\bZ)^G\stackrel{\rm tr}{\rightarrow}
H^2(G/N,\bQ/\bZ)\stackrel{\psi}{\rightarrow}H^2(G,\bQ/\bZ)
\]
where $\psi$ is the inflation map \cite{HS}.

Since {\rm tr} is not surjective, we find that $\psi$ is not the zero map.
Thus ${\rm Image}(\psi)\neq 0$.

We will show that ${\rm Image}(\psi)\subset B_0(G)$. By
definition, it suffices to show that, for any bicyclic subgroup
$A$ of $G$, the composite map
$H^2(G/N,\bQ/\bZ)\stackrel{\psi}{\rightarrow}H^2(G,\bQ/\bZ)
\stackrel{\rm res}{\rightarrow}H^2(A,\bQ/\bZ)$ becomes the zero
map where {\rm res} is the restriction map. Consider the following
commutative diagram
\begin{align*}
H^2(G/N,\bQ/\bZ)\stackrel{\psi}{\rightarrow} H^2(G,\bQ/\bZ)
\stackrel{\rm res}{\rightarrow} H^2(A,\bQ/\bZ)\\
\psi_0\downarrow\hspace*{5.5cm}\uparrow\psi_1\hspace*{1.0cm}\\
H^2(AN/N,\bQ/\bZ)\ \ \ \ \
\stackrel{\widetilde{\psi}}{\cong}\ \ \ \ \
H^2(A/A\cap N,\bQ/\bZ)
\end{align*}
where $\psi_0$ is the restriction map,
$\psi_1$ is the inflation map,
$\widetilde{\psi}$ is the natural isomorphism.

Since $AN/N$ is cyclic, write $AN/N\cong C_m$ for some integer $m$.
It is well-known that $H^2(C_m,\bQ/\bZ)=0$ (see, for example,
\cite[page 37, Corollary 2.2.12]{Kar}).
Hence $\psi_0$ is the zero map.
Thus ${\rm res}\circ \psi : H^2(G/N,\bQ/\bZ)\rightarrow H^2(A,\bQ/\bZ)$
is also the zero map.

As ${\rm Image}(\psi)\subset B_0(G)$ and ${\rm Image}(\psi)\neq 0$,
we find that $B_0(G)\neq 0$.
\end{proof}
\begin{lemma}\label{lemf}
Let $p\geq 3$ and $G$ be a $p$-group of order $p^5$ generated by
$f_i$ where $1\leq i\leq 5$.
Suppose that, besides other relations, the generators $f_i$'s satisfy
the following conditions.

{\rm (i)} $f_4^p=f_5^p=1$, $f_5\in Z(G)$,

{\rm (ii)} $[f_2,f_1]=f_3, [f_3,f_1]=f_4, [f_4,f_1]=[f_3,f_2]=f_5,
[f_4,f_2]=[f_4,f_3]=1$,

and

{\rm (iii)} $\langle f_4, f_5\rangle\cong C_p\times C_p$,
$G/\langle f_4,f_5\rangle$ is a non-abelian group of order $p^3$
and of exponent $p$.

Then $B_0(G)\neq 0$.
\end{lemma}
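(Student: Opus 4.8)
The plan is to apply Lemma \ref{lem21} with a well-chosen normal subgroup $N$, so that the whole problem reduces to two verifications: a transgression-non-surjectivity statement and a bicyclic-image-cyclic statement. Given the relations in (i)--(iii), the natural candidate is $N = \langle f_4, f_5 \rangle$, which is central (by (i) and (ii) it is generated by central elements: $f_5 \in Z(G)$, and $[f_4,f_1]=f_5$, $[f_4,f_2]=[f_4,f_3]=1$ show $f_4$ commutes with $f_2,f_3$; it need not commute with $f_1$, so in fact $N$ is normal but not necessarily central — I would double-check this, but normality is clear since $[f_4,f_1]=f_5\in N$). By (iii), $N \cong C_p \times C_p$ and $\overline{G} := G/N$ is the non-abelian group of order $p^3$ and exponent $p$ (the Heisenberg group over $\mathbb{F}_p$), generated by the images $\overline{f_1}, \overline{f_2}$ with $\overline{f_3} = [\overline{f_2},\overline{f_1}]$ central.

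First I would verify condition (ii) of Lemma \ref{lem21}: for every bicyclic subgroup $A \le G$, the image $AN/N$ is cyclic in $\overline{G}$. Since $\overline{G}$ has exponent $p$, every cyclic subgroup has order $1$ or $p$, and every bicyclic subgroup of $\overline{G}$ has order dividing $p^2$; so I need to rule out $AN/N \cong C_p \times C_p$. This should follow because $\overline{G}$ is an extraspecial-type group of order $p^3$: any subgroup of $\overline{G}$ of order $p^2$ contains the center $\overline{f_3}$, and the preimage in $G$ of such a subgroup has order $p^4$ and contains $[G,G] \cap N$ — I would argue that a bicyclic $A$ with $|AN/N| = p^2$ forces $A$ itself to be abelian of rank $\geq 2$ mapping isomorphically onto a noncyclic subgroup of $\overline{G}$, and then show that the commutator structure of $G$ (specifically $[f_2,f_1]=f_3 \notin N$, while $f_3 \notin N$) makes any two elements of $G$ whose images generate a noncyclic subgroup of $\overline{G}$ fail to commute. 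Concretely: if $\overline{a}, \overline{b}$ generate $C_p \times C_p \le \overline{G}$, then they do not both centralize each other's lift unless their commutator lies in $N$; but in the Heisenberg quotient two independent elements have nontrivial commutator $\overline{f_3}^{\,t} \neq 1$, which does not lift into $N$. Hence $A$ cannot be bicyclic. This is the step I expect to require the most care.

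Next I would verify condition (i): the transgression $\mathrm{tr}: H^1(N, \mathbb{Q}/\mathbb{Z})^G \to H^2(\overline{G}, \mathbb{Q}/\mathbb{Z})$ is not surjective. Here $N \cong C_p \times C_p$, so $H^1(N,\mathbb{Q}/\mathbb{Z}) = \mathrm{Hom}(N, \mathbb{Q}/\mathbb{Z}) \cong (\mathbb{Z}/p)^2$, and I would first compute the $G$-invariants $H^1(N,\mathbb{Q}/\mathbb{Z})^G$ under the conjugation action: since $f_2, f_3$ centralize $N$ and $f_1$ acts by $f_4 \mapsto f_4 f_5$, $f_5 \mapsto f_5$, the invariant characters are exactly those vanishing on $f_4$ modulo... — i.e., $H^1(N,\mathbb{Q}/\mathbb{Z})^G$ has order $p$ (the characters killing $f_5$-free part appropriately), generated by the character dual to $f_5$. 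On the other hand $H^2(\overline{G}, \mathbb{Q}/\mathbb{Z}) = M(\overline{G})$, the Schur multiplier of the extraspecial group of order $p^3$ and exponent $p$, which is well known to be $(\mathbb{Z}/p)^2$ for $p$ odd. So the domain has order $p$ while the target has order $p^2$; the map cannot be surjective, and condition (i) holds automatically — no delicate computation of the transgression map itself is even needed, only the orders. I would cite the standard value of the Schur multiplier (e.g. via \cite{Kar}) and the elementary computation of the $G$-fixed characters.

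With both hypotheses of Lemma \ref{lem21} checked, the lemma gives $B_0(G) \neq 0$ immediately. The main obstacle, as noted, is condition (ii) — showing no bicyclic subgroup surjects onto $C_p \times C_p$ in the quotient — which hinges on a clean description of which pairs of elements of $G$ commute, read off from the commutator relations in (ii); the rest (identifying $N$, computing invariant characters, and invoking the known Schur multiplier) is routine. One subtlety to watch: the lemma's hypotheses only specify "besides other relations," so $G$ is not pinned down uniquely by (i)--(iii); I would make sure every argument uses only the stated relations and the stated structure of $N$ and $G/N$, so that the conclusion holds for any group satisfying them.
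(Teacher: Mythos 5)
Your overall strategy coincides with the paper's: take $N=\langle f_4,f_5\rangle$, apply Lemma \ref{lem21}, get condition (i) by comparing orders ($|H^1(N,\bQ/\bZ)^G|=p$ versus $|H^2(G/N,\bQ/\bZ)|=p^2$ via the known Schur multiplier of the extraspecial group of exponent $p$). That part is sound, modulo a small slip: the invariant characters are those \emph{vanishing on $f_5$} (generated by the character dual to $f_4$), not the other way around, since conjugation by $f_1$ sends $f_4\mapsto f_4f_5$; only the order $p$ matters, so this is harmless.

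However, there is a genuine gap in your treatment of condition (ii), which you yourself flagged as the hard step. You propose to rule out $AN/N\cong C_p\times C_p$ by asserting that ``in the Heisenberg quotient two independent elements have nontrivial commutator $\overline{f_3}^{\,t}\neq 1$.'' This is false for the case at hand: two elements of $\overline{G}$ generating a subgroup isomorphic to $C_p\times C_p$ generate an \emph{abelian} group of order $p^2$, so their commutator in $\overline{G}$ is trivial (e.g.\ $\overline{f_2}$ and $\overline{f_3}$ commute in $G/N$ because $[f_3,f_2]=f_5\in N$). Nontriviality of the commutator in $\overline{G}$ only occurs when the images generate all of $\overline{G}$, which is already excluded since $AN/N$ is abelian. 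Consequently your argument does not exclude any of the actual dangerous cases, namely the maximal (abelian, order $p^2$) subgroups $\langle\overline{f_2},\overline{f_3}\rangle$, $\langle\overline{f_1}\overline{f_2}^{a_2},\overline{f_3}\rangle$, etc. For these one must lift generators to $G$ and compute their commutator \emph{inside $N$} using the relations $[f_3,f_2]=f_5$, $[f_3,f_1]=f_4$, $[f_4,f_1]=f_5$ (note: not $[f_2,f_1]=f_3$, which is the relation you single out), checking in each case that the commutator of any two lifts is a nontrivial element of $N$, so no abelian (in particular no bicyclic) $A$ can surject onto such a subgroup. This is exactly the case-by-case computation (three normal forms for the pair of generators, each leading to a contradiction) that occupies Steps 2--4 of the paper's proof and cannot be bypassed by the quotient-level commutator argument you suggest.
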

\begin{remark}
When $p=2$ and $G/N$ is a non-abelian group of order $8$,
then $H^2(G/N,\bQ/\bZ)$ $=0$ or $C_2$ \cite[page 138, Theorem 3.3.6]{Kar}.
Thus {\rm tr} : $H^1(N,\bQ/\bZ)^G\rightarrow H^2(G/N,\bQ/\bZ)$ in
Lemma \ref{lemf} may become surjective. This is the reason why we assume
$p\geq 3$ in this lemma.
\end{remark}
\begin{proof}
Choose $N=\langle f_4,f_5\rangle$. We will check the conditions in
Lemma \ref{lem21} are satisfied. Thus $B_0(G)\neq 0$.

Step 1.
Since $N\cong C_p\times C_p$, we find that $H^1(N,\bQ/\bZ)\cong C_p\times C_p$.

Define $\varphi_1, \varphi_2\in H^1(N,\bQ/\bZ)={\rm Hom}(N,\bQ/\bZ)$ by
$\varphi_1(f_4)=1/p$, $\varphi_1(f_5)=0$,
$\varphi_2(f_4)=0$, $\varphi_2(f_5)=1/p$.
Clearly $H^1(N,\bQ/\bZ)=\langle\varphi_1,\varphi_2\rangle$.

The action of $G$ on $\varphi_1, \varphi_2$ are given by
${}^{f_1}\varphi_1(f_4)=\varphi_1(f_1^{-1}f_4f_1)
=\varphi(f_4f_5)=\varphi_1(f_4)+\varphi_1(f_5)=1/p$,
${}^{f_1}\varphi_1(f_5)=\varphi_1(f_1^{-1}f_5f_1)
=\varphi(f_5)=0$. Thus ${}^{f_1}\varphi_1=\varphi_1$. Similarly,
${}^{f_1}\varphi_2(f_4)=1/p$, ${}^{f_1}\varphi_2(f_5)=1/p$ and
${}^{f_1}\varphi_2=\varphi_1+\varphi_2$.

For any $\varphi\in
H^1(N,\bQ/\bZ)=\langle\varphi_1,\varphi_2\rangle\cong C_p\times
C_p$, write $\varphi=a_1\varphi_1+a_2\varphi_2$ for some integers
$a_1,a_2\in\bZ$ (modulo $p$). Since
${}^{f_1}\varphi={}^{f_1}(a_1\varphi_1+a_2\varphi_2)
=a_1(^{f_1}\varphi_1)+a_2({}^{f_1}\varphi_2)=(a_1+a_2)\varphi_1+a_2\varphi_2$,
we find that ${}^{f_1}\varphi=\varphi$ if and only if $a_2=0$,
i.e. $\varphi\in\langle\varphi_1\rangle$. On the other hand, it is
easy to see that ${}^{f_2}\varphi_1=\varphi_1={}^{f_3}\varphi_1$
and therefore $\varphi_1\in H^1(N,\bQ,\bZ)^G$. We find
$H^1(N,\bQ/\bZ)^G=\langle\varphi_1\rangle\cong C_p$.

By \cite[Proposition 6.3; Kar, page 138, Theorem 3.3.6]{Le}, since
$G/N$ is a non-abelian group of order $p^3$ and of exponent $p$,
we find $H^2(G/N,\bQ/\bZ)\cong C_p\times C_p$. Thus ${\rm tr} :
H^1(N,\bQ/\bZ)^G\rightarrow H^2(G/N,\bQ/\bZ)$ is not surjective.
Hence the first condition of Lemma \ref{lem21} is verified.

Step 2. We will verify the second condition of Lemma \ref{lem21},
i.e. for any bicyclic subgroup $A$ of $G$, $AN/N$ is a cyclic
group.

Before the proof, we list the following formulae which are
consequences of the commutator relations, i.e. relations {\rm
(ii)} of this lemma. The proof of these formulae is routine and is
omitted.

For $1\leq i,j\leq p-1$,
$f_4^if_1^j=f_1^jf_4^if_5^{ij}$,
$f_3^if_2^j=f_2^jf_3^if_5^{ij}$,

$f_3^if_1^j=f_1^jf_3^if_4^{ij}f_5^{i\cdot \binom{j}{2}}$,

$f_2^if_1^j=f_1^jf_2^if_3^{ij}f_4^{i\cdot\binom{j}{2}}
f_5^{i\cdot\binom{j}{3}+\binom{i}{2}\cdot j}$\\
where $\binom{a}{b}$ denotes the binomial coefficient when $a\geq b\geq 1$ and
we adopt the convention $\binom{a}{b}=0$ if $1\leq a<b$.

Moreover, in $G/N$, $(\of_1^j\of_2^i)^e
=\of_1^{ej}\of_2^{ei}\of_3^{\binom{e}{2}\cdot ij}$ for $1\leq
i,j\leq p-1$, $1\leq e\leq p$.

Step 3. Let $A=\langle h_1,h_2\rangle$ be a bicyclic subgroup of $G$.
We will show that $AN/N$ is cyclic in $G/N$.

Since $AN/N$ is abelian and $G/N$ is not abelian.
We find that $AN/N$ is a proper subgroup of $G/N$ which is of order $p^3$.

If $|AN/N|\leq p$, then $AN/N$ is cyclic. From now on, we will
assume $AN/N$ is an order $p^2$ subgroup and try to find a
contradiction.

In $G/N$, write $\oh_1=\of_1^{a_1}\of_2^{a_2}\of_3^{a_3}$,
$\oh_2=\of_1^{b_1}\of_2^{b_2}\of_3^{b_3}$ for some integers $a_j$,
$b_j$ (recall that $G/N=\langle \of_1,\of_2,\of_3\rangle$ and
$A=\langle h_1,h_2\rangle$). After suitably changing the
generators $h_1$ and $h_2$, we will show that there are only three
possibilities: $(\oh_1, \oh_2)=(\of_2, \of_3)$,
$(\of_1\of_3^{a_3}, \of_2\of_3^{b_3})$, $(\of_1\of_2^{a_2},
\of_3)$ for some integers $a_2,a_3,b_3$.

Suppose $\oh_1=\of_1^{a_1}\of_2^{a_2}\of_3^{a_3}$ and
$\oh_2=\of_1^{b_1}\of_2^{b_2}\of_3^{b_3}$ as above. If
$a_1=b_1=0$, then $\langle
\oh_1,\oh_2\rangle=\langle\of_2,\of_3\rangle$. Thus after changing
the generating elements $h_1,h_2$, we may assume that
$\oh_1=\of_2, \oh_2=\of_3$. This is the first possibility.

If $a_1\not\equiv 0$ or $b_1\not\equiv 0\pmod{p}$, we may assume
$1\leq a_1\leq p-1$. Find an integer $e$ such that $1\leq e\leq
p-1$ and $a_1e\equiv 1\pmod{p}$. Use the formulae in Step 2, we
get $\oh_1^e=\of_1\of_2^{c_2}\of_3^{c_3}$. Since $\langle
h_1,h_2\rangle=\langle h_1^e,h_2\rangle$, without loss of
generality, we may assume that $\oh_1=\of_1\of_2^{a_2}\of_3^{a_3}$
(i.e. $a_1=1$ from the beginning).

Since $\langle h_1,h_2\rangle=\langle h_1,(h_1^{b_1})^{-1}h_2\rangle$, we may
assume $\oh_1=\of_1\of_2^{a_2}\of_3^{a_3}$ and $\oh_2=\of_2^{b_2}\of_3^{b_3}$.

In case $1\leq b_2\leq p-1$, take an integer $e'$ with $1\leq
e'\leq p-1$ and $b_2 e'\equiv 1\pmod{p}$. Use the generating set
$\langle h_1,h_2^{e'}\rangle$ for A. Thus we may assume
$\oh_1=\of_1\of_3^{a_3}$, $\oh_2=\of_2\of_3^{b_3}$. This is the
second possibility.

If $b_2\equiv 0\pmod{p}$, then $\oh_1=\of_1\of_2^{a_2}\of_3^{a_3}$,
$\oh_2=\of_3^{b_3}$.
If $b_3=0$, then $AN/N$ is cyclic.
Thus $b_3\not\equiv 0\pmod{p}$.
Changing the generators again, we may assume
$\oh_1=\of_1\of_2^{a_2}$, $\oh_2=\of_3$.
This is the third possibility.

Step 4.
We will show that all three possibilities in Step 3 lead to contradiction.

Suppose $\oh_1=\of_2, \oh_2=\of_3$. Write
$h_1=f_2f_4^{a_4}f_5^{a_5}$, $h_2=f_3f_4^{b_4}f_5^{b_5}$. Since
$h_1h_2=h_2h_1$, we get
$f_2f_4^{a_4}f_3f_4^{b_4}=f_3f_4^{b_4}f_2f_4^{a_4}$ (because
$f_5\in Z(G)$). Rewrite this identity with the help of the
formulae in Step 2. We get
$f_2f_3f_4^{a_4+b_4}=f_2f_3f_4^{a_4+b_4}f_5$, which is a
contradiction.

Suppose $\oh_1=\of_1\of_3^{a_3}$, $\oh_2=\of_2\of_3^{b_3}$.
In $G/N$, we have $\oh_1\oh_2=\oh_2\oh_1$.
But it is obvious the two elements $\of_1\of_3^{a_3}$, $\of_2\of_3^{b_3}$
do not commute. Done.

Suppose $\oh_1=\of_1\of_2^{a_2}$, $\oh_2=\of_3$.
Write $h_1=f_1f_2^{a_2}f_4^{a_4}f_5^{a_5}$,
$h_2=f_3f_4^{b_4}f_5^{b_5}$.
Use the fact $h_1h_2=h_2h_1$.
It is easy to find a contradiction.
\end{proof}

\bigskip

\section{Proof of Theorem \ref{thmain} and Theorem \ref{thre}}\label{seproof}

Proof of Theorem \ref{thmain} ---------

First we will show that, if $G$ is a $p$-group with $B_0(G)\neq 0$
and $k$ is any field with char $k\neq p$, then $k(G)$ is not
retract $k$-rational.

Suppose not. Assume that $k(G)$ is retract $k$-rational. Then
$\overline{k}(G)$ is also retract $\overline{k}$-rational where
$\overline{k}$ is the algebraic closure of $k$. By Lemma
\ref{lemSa} and Theorem \ref{thBS}, we obtain $B_0(G)=0$, which is
a contradiction.

It remains to show that there is a group $G$ of order $p^5$ with $B_0(G)\neq 0$.

If $p=3$, define $G=G(243,28)$. Use the database of GAP. The
generators and relations of $G$ are given by

$G=\langle f_i : 1\leq i\leq 5\rangle$, $Z(G)=\langle f_5\rangle$,
and the relations

$f_1^3=f_4^3=f_5^3=1$, $f_2^3=f_4^{-1}$, $f_3^3=f_5^{-1}$,

$[f_2,f_1]=f_3, [f_3,f_1]=f_4, [f_4,f_1]=[f_3,f_2]=f_5,
[f_4,f_2]=[f_4,f_3]=1$.

Note that this group $G$ satisfies the conditions of Lemma
\ref{lemf}. Apply Lemma \ref{lemf}. We find $B_0(G)\neq 0$. It is
not difficult to see that $[G,G] = \langle f_3,f_4,f_5\rangle$ and
$G/[G,G] \simeq C_p \times C_p$.

Suppose $p\geq 5$. Define a group $G$ by generators and relations
as follows,

$G=\langle f_i : 1\leq i\leq 5\rangle$, $Z(G)=\langle f_5\rangle$
with the additional relations

$f_1^p=f_5$, $f_i^p=1$ for $2\leq i\leq 5$,

$[f_2,f_1]=f_3, [f_3,f_1]=f_4, [f_4,f_1]=[f_3,f_2]=f_5,
[f_4,f_2]=[f_4,f_3]=1$.

This is a well-defined group of order $p^5$ by Bender's
classification \cite{Be}. For, this group is the group 43 with
$n=0$ in \cite[page 69]{Be}. This group is the group $G_0(2|p)$
defined in the next section.

Apply Lemma \ref{lemf} again. We find that $B_0(G)\neq 0$.
Moreover, $G/[G,G] \simeq C_p \times C_p$. Done.

\bigskip
Proof of Theorem \ref{thre} ---------

Suppose that $p^5\mid n$ for some odd prime number $p$. Write
$n=p^5m$. By Theorem \ref{thmain} choose a group $G_0$ of order
$p^5$ satisfying $B_0(G_0)\neq 0$. Define $G=G_0\times C_m$.

We will prove that $k(G)$ is not stably $k$-rational (resp. not
retract $k$-rational if $k$ is infinite). Suppose not. Assume that
$k(G)$ is stably $k$-rational (resp. retract $k$-rational if $k$
is infinite) Then so is $\overline{k}(G)$ over $\overline{k}$
where $\overline{k}$ is the algebraic closure of $k$. In
particular, $\overline{k}(G)$ is retract $\overline{k}$-rational.
Since $G=G_0\times C_m$, by \cite[Theorem 1.5, Ka, Lemma
3.4]{Sa1}, we find that $\overline{k}(G_0)$ is retract
$\overline{k}$-rational. This implies $B_0(G)=0$ by Lemma
\ref{lemSa}. A contradiction.

In case $2^6\mid n$, the proof is similar by applying Theorem
\ref{thSB}.

\bigskip
\begin{remark}
In the proof of \cite[Lemma 5.6, page 478]{Bo}, Bogomolov tried to
prove that there do not exist $p$-groups $G$ of order $p^5$ with
$B_0(G)\neq 0$. He assumed that the commutator group $[G,G]$ was
abelian and discussed three situations when the order of $G/[G,G]$
was $p^2$, $p^3$, or $ \ge p^4$ (in general, if $G$ is a
non-abelian group of order $p^5$, then $[G,G]$ is abelian, since
$G$ has an abelian normal subgroup of order $p^3$ by a theorem of
Burnside). The case when $G/[G,G]=p^2$ was reduced to \cite[Lemma
4.11, page 478]{Bo} (see the first part of Theorem \ref{thmain}).
But this lemma is disproved by Theorem \ref{thmain}.
\end{remark}

\bigskip

\section{More groups $G$ with $B_0(G)\neq 0$}\label{seMore}

First we recall the classification of all groups of order $p^5$ up
to isomorphism. A list of groups of order $2^5$ (resp. $3^5$,
$5^5$, $7^5$) can be found in the database of GAP. However the
classification of groups of order $p^5$ dated back to Bagnera
(1898), Bender (1927), R. James (1980), etc. \cite{Ba, Be, Ja},
although some minor errors might occur in the classification
results finished before the computer-aided time. For examples, in
Bender's classification of groups of order $3^5$, one group is
missing, i.e. the group $\Delta_{10}(2111)a_2$ which was pointed
by \cite[page 613]{Ja}. A beautiful formula for the total number
of the groups of order $p^5$, for $p\geq 3$, was found by Bagnera
\cite{Ba} as
\[
2p+61+{\rm gcd}\{4,p-1\}+2{\rm gcd}\{3,p-1\}.
\]

A similar formula for groups $G$ of order $p^5$ $(p\geq 5)$ with
$B_0(G)\neq 0$ seems possible (see Theorem \ref{thB0n} and the
remark after it).

Before we state our results for $p\geq 5$, let us record the
result for $p=3$ first.
\begin{theorem}[Moravec \cite{Mo}]
Let $G$ be a group of order $3^5$.
Then $B_0(G)\neq 0$ if and only if $G=G(3^5,i)$ where $28\leq i\leq 30$.
\end{theorem}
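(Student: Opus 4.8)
The plan is to prove the two implications separately, leaning on Lemma~\ref{lemf} for the ``if'' direction and on standard vanishing criteria for $B_0$ together with the classification of groups of order $p^5$ for the ``only if'' direction.

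For the ``if'' direction, the group $G(3^5,28)$ has already been verified to satisfy the hypotheses of Lemma~\ref{lemf} in the proof of Theorem~\ref{thmain}. For $G(3^5,29)$ and $G(3^5,30)$ I would read off their presentations from the GAP library and observe that all three groups share the commutator relations~(ii) of Lemma~\ref{lemf}, that $N:=\langle f_4,f_5\rangle\cong C_3\times C_3$ lies in $Z(G)$ with $f_4^3=f_5^3=1$, and that $G/N$ is the non-abelian group of order $3^3$ and exponent $3$. The three groups differ only in the way the powers $f_1^3,f_2^3,f_3^3$ are expressed as elements of $N$ (for $G(3^5,28)$ one has $f_1^3=1$, $f_2^3=f_4^{-1}$, $f_3^3=f_5^{-1}$), and those power relations are irrelevant to conditions~(i)--(iii) of Lemma~\ref{lemf}. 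Hence Lemma~\ref{lemf} applies to each, giving $B_0(G)\neq 0$; one also checks $G/[G,G]\cong C_3\times C_3$ as in Theorem~\ref{thmain}.

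For the ``only if'' direction I would run through the $67$ isomorphism classes of groups of order $3^5$ (James's list \cite{Ja}, incorporating the correction $\Delta_{10}(2111)a_2$ mentioned above), and show $B_0(G)=0$ for every class other than the three exceptional ones. The overwhelming majority are dispatched by known criteria of Bogomolov \cite{Bo}: $B_0(G)=0$ if $G$ is abelian, if the commutator subgroup $[G,G]$ is cyclic, or if $G$ possesses an abelian subgroup of index $p$ (equivalently an abelian normal subgroup with cyclic quotient). Since a non-abelian group of order $p^5$ has $[G,G]$ abelian, these reductions leave only the groups with $[G,G]$ non-cyclic of order $p^2$ or $p^3$ and with no abelian subgroup of index $p$ --- a short residual list containing $G(3^5,28)$, $G(3^5,29)$, $G(3^5,30)$ together with a few others, all of them of maximal class.

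For each group $G$ on this residual list other than the three exceptional ones, I would establish $B_0(G)=0$ directly, using the Hochschild--Serre $5$-term sequence of Lemma~\ref{lem21} in the reverse direction: choose a suitable normal subgroup $N$ (e.g.\ $N=[G,[G,G]]$ or $N=[G,G]\cap Z(G)$), verify that the transgression $\mathrm{tr}\colon H^1(N,\bQ/\bZ)^G\to H^2(G/N,\bQ/\bZ)$ is now \emph{surjective} so that the inflation $\psi$ vanishes and $H^2(G,\bQ/\bZ)$ receives nothing from $H^2(G/N,\bQ/\bZ)$, and then account for the surviving part of $H^2(G,\bQ/\bZ)$ by exhibiting, for each nonzero class, a bicyclic subgroup of $G$ on which its restriction is nonzero. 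Concretely this reduces to computing $H^2(G,\bQ/\bZ)$ and a handful of restriction maps by means of the commutator calculus of the type displayed in Step~2 of the proof of Lemma~\ref{lemf}.

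The main obstacle is precisely this last step, namely the case-by-case work on the residual list: one must first be certain the list is exhaustive --- that every remaining group of order $3^5$ genuinely satisfies one of the three vanishing criteria --- and then carry out, for the few maximal-class groups with non-cyclic commutator subgroup and no abelian subgroup of index $p$, the explicit cohomological computation proving $B_0(G)=0$. This is exactly the content that Moravec \cite{Mo} obtained by machine; reproducing it by hand is elementary but laborious, involving a careful analysis of a small number of two-generator quotients with extraspecial-type structure, and it is the only genuinely computational part of the argument.
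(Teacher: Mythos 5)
Your ``if'' direction coincides with the paper's: all three groups $G(3^5,i)$, $28\leq i\leq 30$, share the commutator relations of Lemma~\ref{lemf} and differ only in the power relations $f_1^3,f_2^3,f_3^3$, which are irrelevant to hypotheses (i)--(iii), so Lemma~\ref{lemf} applies verbatim. That half is correct and is exactly what the paper does.

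The ``only if'' direction, however, contains a genuine gap. The paper does \emph{not} attempt the classification-plus-case-analysis you outline; it simply invokes the theorem of Chu, Hu and Kang \cite{CHK} that $k(G)$ is $k$-rational for every group $G$ of order $243$ outside the three exceptional classes, and then concludes $B_0(G)=0$ from Lemma~\ref{lemSa} (rational $\Rightarrow$ retract rational $\Rightarrow$ trivial unramified Brauer group). Your alternative route --- dispatch most of the $67$ classes by Bogomolov's vanishing criteria and then kill $B_0$ by hand for a residual list of maximal-class groups --- is a legitimate strategy in principle, but you have not executed its decisive step. You neither verify that the residual list is what you claim it is, nor carry out the cohomological computation showing $B_0(G)=0$ for the groups on it; you explicitly defer this as ``elementary but laborious.'' Note also that the Hochschild--Serre argument you sketch does not by itself yield $B_0(G)=0$: showing that the inflation $\psi$ from one particular quotient $G/N$ vanishes says nothing about classes of $H^2(G,\bQ/\bZ)$ not inflated from $G/N$; one must compute the full Schur multiplier and check that \emph{every} nonzero class restricts nontrivially to some bicyclic subgroup. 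Since the entire content of the ``only if'' direction lives in precisely the part you leave undone, the proposal as written does not establish that half of the theorem; to complete it you would either have to carry out the residual computations in full or, as the paper does, cite the rationality result of \cite{CHK}.
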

\begin{proof}
Because Moravec proved this theorem by computers,
we will give a bare-hand proof of it.

For $G=G(3^5,28)$, the proof that $B_0(G)\neq 0$ is given in the
proof of Theorem \ref{thmain} in Section \ref{seproof} by using
Lemma \ref{lemf}.

If $G=G(3^5,29)$ or $G(3^5,30)$, the generators and relations of
$G$ are almost the same as $G(3^5,28)$, which are given  in the
proof of Theorem \ref{thmain} except the following relations for
$G(3^5,28)$
\[
f_1^3=f_4^3=f_5^3=1,\, f_2^3=f_4^{-1},\, f_3^3=f_5^{-1}
\]
should be replaced by
\[
f_4^3=f_5^3=1,\, f_1^3=f_5,\, f_2^3=f_4^{-1},\, f_3^3=f_5^{-1}
\]
and
\[
f_4^3=f_5^3=1,\, f_1^3=f_3^3=f_5^{-1},\, f_2^3=f_4^{-1}
\]
for $G(3^5,29)$ and $G(3^5,30)$ respectively.

Since $G(3^5,29)$ and $G(3^5,30)$ satisfy the conditions of Lemma
\ref{lemf}, the proof in Theorem \ref{thmain} for $G(3^5,28)$
works for $G(3^5,29)$ and $G(3^5,30)$ as well. Hence $B_0(G)\neq
0$ for these groups.

If $G$ is not isomorphic to $G(3^5,i)$ for $28\leq i\leq 30$, then
$\bC(G)$ is $\bC$-rational by \cite{CHK}.
Hence $B_0(G)=0$ by Lemma \ref{lemSa}.
\end{proof}
\begin{example}
We may try imitating the construction of the groups $G(3^5,i)$ where
$28\leq i\leq 30$ to define groups of order $p^5$ with $p\geq 5$.
Explicitly, take the $G(3^5,28)$ and define a group $G(28|p)$ by

$G(28|p)=\langle f_i : 1\leq i\leq 5\rangle$, $Z(G(28|p))=\langle f_5\rangle$,

$f_1^p=f_4^p=f_5^p=1$, $f_2^p=f_4^{-1}$, $f_3^p=f_5^{-1}$,

$[f_2,f_1]=f_3$, $[f_3,f_1]=f_4$, $[f_4,f_1]=[f_3,f_2]=f_5$, $[f_4,f_2]=[f_4,f_3]=1$.

When $p=3$, $G(28|p)$ is just the group $G(3^5,28)$.

What happens if $p=5,7,11,\ldots$ etc. ? Well, as an experiment,
using GAP to compute the group $G(28|5)$ and $G(28|7)$ first, we
find that they are groups of order $5^4$ and $7^4$ respectively,
instead of $5^5$ and $7^5$! Thus the group $G(28|p)$ for $p\geq 5$
is not a group of order $p^5$ at least when $p=5$ or $p=7$. Thus
we cannot apply Lemma \ref{lemf} to these groups.

Here is a proof for any prime number $p \ge 5$ that the three
groups $G(28|p)$, $G(29|p)$ and $G(30|p)$ are not of order $p^5$.

In Bender's classification \cite{Be}, the groups $G(3^5,28)$ and
$G(3^5,30)$ are just the groups $50$ for $m=1$ and $m=2$
respectively \cite[papge 70]{Be}. Note that Bender's group 50 is
defined only for $p=3$; there is no analogous groups for $p\geq 5$
in Bender's list. Hence $G(28|p)$ and $G(30|p)$ for $p\geq 5$ are
not groups of order $p^5$.

As to the group $G(3^5,29)$, it is the group overlooked by Bender
and was pointed out by James \cite[page 613]{Ja}. Thus the group
$G(29|p)$ does not appear in the list of Bender's classification.
But this group is James's $\Delta_{10}(2111)a_2$ \cite[page
621]{Ja}, while the groups $\Delta_{10}(2111)a_1$ and
$\Delta_{10}(2111)a_3$ are the GAP groups $G(3^5,28)$ and
$G(3^5,30)$ respectively (recall James's notation: when $p=3$, the
group $\Phi_{10}(2111)a_r$ is written as $\Delta_{10}(2111)a_r$).
Note that, for $p \ge 5$, the group corresponding to $G(3^5,29)$
is defined by a modified way in \cite[page 621]{Ja}, i.e. the
definitions of the corresponding group and the group $G(29|p)$ are
different.
\end{example}

\bigskip
Now we turn to the case $p\geq 5$.

\begin{definition}
Let $p\geq 3$ and define a group $G(1|p)$ by

$G(1|p)=\langle f_i : 1\leq i\leq 5\rangle$, $Z(G(1|p))=\langle f_5\rangle$,

$f_i^p=1$ for $1\leq i\leq 5$,

$[f_2,f_1]=f_3$, $[f_3,f_1]=f_4$, $[f_4,f_1]=[f_3,f_2]=f_5$, $[f_4,f_2]=f_4,f_3]=1$.
\end{definition}
When $p\geq 5$, the group $G(1|p)$ is the group 42 in \cite[page
69]{Be} and $\Phi_{10}(1^5)$ in \cite[page 621]{Ja}. Note that, in
\cite[page 69]{Be}, it is emphasized that the group 42 is defined
only when $p\geq 5$. In summary, if $p\geq 5$, the group $G(1|p)$
is a group of order $p^5$.

When $p=3$, the group $G(1|p)$ is not a group of order $3^5$
either by computing with the aid of GAP or by the classification
of Bender and James \cite{Be, Ja}.
\begin{definition}
Let $p\geq 3$ and $\alpha$ be the smallest positive integer which
is a primitive root $\pmod{p}$. Define $c_2={\rm gcd}\{4,p-1\}-1$.
For $0\leq r\leq c_2$, we define a group $G_r(2|p)$ by

$G_r(2|p)=\langle f_i : 1\leq i\leq 5\rangle$, $Z(G_r(2|p))=\langle f_5\rangle$,

$f_1^p=f_5^{\alpha^r}, f_i^p=1$ for $2\leq i\leq 5$,

$[f_2,f_1]=f_3$, $[f_3,f_1]=f_4$, $[f_4,f_1]=[f_3,f_2]=f_5$,
$[f_4,f_2]=f_4,f_3]=1$.
\end{definition}
When $p\geq 5$, the groups $G_r(2|p)$ for $0\leq r\leq c_2$ are the group 43
in \cite[page 69]{Be}, and are the groups $\Phi_{10}(2111)a_r$ in
\cite[page 621]{Ja}.
When $p\geq 5$, these groups are groups of order $p^5$.
When the parameters $(p,r)$ where $p\geq 5$, $0\leq r\leq c_2$ are distinct,
the corresponding groups $G_r(2|p)$ are not isomorphic to each other.
In conclusion, if $p\geq 5$, the groups $G_r(2|p)$ where $0\leq r\leq c_2$
are non-isomorphic groups of order $p^5$.

Similarly, when $p=3$, the groups $G_0(2|3)$ and $G_1(2|3)$ are not
groups of order $3^5$.
\begin{definition}
Let $p\geq 3$ and $\alpha$ be the smallest positive integer which
is a primitive root $\pmod{p}$. Define $c_3={\rm gcd}\{3,p-1\}-1$.
For $0\leq r\leq c_3$, we define a group $G_r(3|p)$ by

$G_r(3|p)=\langle f_i : 1\leq i\leq 5\rangle$, $Z(G_r(3|p))=\langle f_5\rangle$,

$f_2^p=f_5^{\alpha^r}$, $f_1^p=f_i^p=1$ for $2\leq i\leq 5$,

$[f_2,f_1]=f_3$, $[f_3,f_1]=f_4$, $[f_4,f_1]=[f_3,f_2]=f_5$,
$[f_4,f_2]=[f_4,f_3]=1$.
\end{definition}
When $p\geq 5$, the groups $G_r(3|p)$ for $0\leq r\leq c_3$ are
the group $52$ in \cite[page 70]{Be}, and are the groups
$\Phi_{10}(2111)b_r$ in \cite[page 621]{Ja}. When $p\geq 5$, the
groups $G_r(3|p)$ where $0\leq r\leq c_3$ are non-isomorphic
groups of order $p^5$. Similarly, when $p=3$, the group $G_0(3|3)$
is not a group of order $3^5$.
\begin{theorem}\label{thB0n}
Let $p\geq 5$ and $G$ be a group of order $p^5$ isomorphic to any
one of $G(1|p)$, $G_r(2|p)$ for $0\leq r\leq c_2$, $G_r(3|p)$ for
$0\leq r\leq c_3$. Then $B_0(G)\neq 0$. The total number of such
groups is
\[
1+{\rm gcd}\{4,p-1\}+{\rm gcd}\{3,p-1\}.
\]
\end{theorem}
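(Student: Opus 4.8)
The plan is to reduce the statement to a single application of Lemma \ref{lemf} applied uniformly to every group in the list. All of $G(1|p)$, $G_r(2|p)$ ($0\le r\le c_2$) and $G_r(3|p)$ ($0\le r\le c_3$) are generated by $f_1,\dots,f_5$ and share exactly the commutator relations displayed in condition (ii) of Lemma \ref{lemf}, namely $[f_2,f_1]=f_3$, $[f_3,f_1]=f_4$, $[f_4,f_1]=[f_3,f_2]=f_5$, $[f_4,f_2]=[f_4,f_3]=1$; they differ only in the power relations for $f_1^p$ and $f_2^p$. So the whole argument comes down to checking, for each family, that the three hypotheses (i)--(iii) of Lemma \ref{lemf} hold, and then the conclusion $B_0(G)\ne 0$ is immediate. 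Finally, once Theorem \ref{thmain} and the discussion preceding the theorem establish that these are pairwise non-isomorphic groups of order $p^5$ for $p\ge 5$, the count $1+\gcd\{4,p-1\}+\gcd\{3,p-1\}$ follows by simply adding $1$ (for $G(1|p)$) plus $c_2+1=\gcd\{4,p-1\}$ (for the $G_r(2|p)$) plus $c_3+1=\gcd\{3,p-1\}$ (for the $G_r(3|p)$).

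The first step is to verify hypothesis (i) of Lemma \ref{lemf}: in every case $f_4^p=f_5^p=1$ holds (the power relations only ever modify $f_1^p$ or $f_2^p$, setting them equal to a power of $f_5$ or to $f_4^{-1}$, never touching $f_4^p$ or $f_5^p$), and $f_5\in Z(G)$ is given in each definition. The second step is hypothesis (ii), which is literally the list of commutator relations built into each definition, so nothing is to be done beyond pointing this out. The third step is hypothesis (iii): I would set $N=\langle f_4,f_5\rangle$ and check that $N\cong C_p\times C_p$ (both generators have order $p$, they commute since $[f_4,f_3]=1$ and hence a fortiori, and $f_5$ is central; one checks $f_4\notin\langle f_5\rangle$ from the structure of $G$), that $N$ is normal (it is generated by commutators together with the center, or directly: $f_4$ and $f_5$ are each normalized by all $f_i$ using the commutator relations), and that $G/N$ is the non-abelian group of order $p^3$ and exponent $p$. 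For the last point one observes that modulo $N$ the relations collapse to $[\of_2,\of_1]=\of_3$, $[\of_3,\of_1]=[\of_3,\of_2]=1$ and $\of_i^p=1$ for all $i$ — here one uses $p\ge 5$ (equivalently $p\ge 3$) so that the power relations $f_1^p=f_5^{\alpha^r}$ or $f_2^p=f_5^{\alpha^r}$ or $f_2^p=f_4^{-1}$ all become trivial in $G/N$ — which is precisely the Heisenberg group of order $p^3$ and exponent $p$.

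The only genuinely delicate point, and the one I expect to be the main obstacle, is confirming that each of these presentations really does define a group of order $p^5$ (so that $|N|=p^2$ and $|G/N|=p^3$ as claimed, rather than some degenerate collapse as happened with $G(28|p)$ for $p\ge 5$ in the Example). This is not something Lemma \ref{lemf} gives for free, and it is where one must invoke the classification of groups of order $p^5$: $G(1|p)$ is Bender's group $42$ (valid for $p\ge 5$), the $G_r(2|p)$ are Bender's group $43$, and the $G_r(3|p)$ are Bender's group $52$, with James's $\Phi_{10}$-notation providing the cross-check; these references also supply the non-isomorphism of the groups for distinct parameters, which is what makes the count exact rather than merely an upper bound. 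Once that bookkeeping with \cite{Be, Ja} is in place, the application of Lemma \ref{lemf} is routine and the theorem follows.
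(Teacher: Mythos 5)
Your proposal is correct and follows essentially the same route as the paper: the paper's proof is a one-line application of Lemma \ref{lemf}, with the well-definedness and pairwise non-isomorphism of the groups delegated to the classifications of Bender and James exactly as you indicate, and the count obtained by the same arithmetic $1+(c_2+1)+(c_3+1)$. Your more detailed verification of hypotheses (i)--(iii) of Lemma \ref{lemf} is sound (modulo a harmless slip in citing the relation $f_2^p=f_4^{-1}$, which belongs to $G(28|p)$ rather than to any group in this theorem), so there is nothing further to add.
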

\begin{proof}
$G$ satisfies the conditions of Lemma \ref{lemf}. Apply Lemma
\ref{lemf}. We find $B_0(G)\neq 0$. The total number of such
groups is $1+c_2+c_3$, which is the same as the formula given in
the theorem.
\end{proof}

\begin{remark}
According to the notation of \cite{Ja}, all the groups in Theorem
\ref{thB0n} consist of a single isoclinism class among groups of
order $p^5$.

We don't know whether these groups are the only groups $G$ of
order $p^5$ (for $p\ge 5$) with nontrivial $B_0(G)$. Moravec
informed us recently it was the case when $p=5$ and $7$ (besides
$p=3$) by using his computer program. But we don't have a
theoretic proof for it. Of course, the above computation of
Moravec may be extended to the cases $p=11, 13$, etc. if more
powerful computers are used. At present, it requires a lot of
computer time to check this question even when $p = 11$.

For the convenience of readers, the following list provides the
GAP code
numbers of the groups  $G(1|p)$, $G_r(2|p)$, $G_r(3|p)$ when $5\leq p\leq 37$. \\

\begin{tabular}{l|l|l|l|}
$p$ & $G(1|p)$ & $G_r(2|p)$ & $G_r(3|p)$\\\hline
$5$ & $G(33)$ & $G(34),G(35),G(36),G(37)$ & $G(38)$\\
$7$ & $G(37)$ & $G(41),G(42)$ & $G(38),G(39),G(40)$\\
$11$ & $G(39)$ & $G(41),G(42)$ & $G(40)$\\
$13$ & $G(43)$ & $G(47),G(48),G(49),G(50)$ & $G(44),G(45),G(46)$\\
$17$ & $G(45)$ & $G(47),G(48),G(49),G(50)$ & $G(46)$\\
$19$ & $G(49)$ & $G(53),G(54)$ & $G(50),G(51),G(52)$\\
$23$ & $G(51)$ & $G(53),G(54)$ & $G(52)$\\
$29$ & $G(57)$ & $G(59),G(60),G(61),G(62)$ & $G(58)$\\
$31$ & $G(61)$ & $G(65),G(66)$ & $G(62),G(63),G(64)$\\
$37$ & $G(67)$ & $G(71),G(72),G(73),G(74)$ & $G(68),G(69),G(70)$
\end{tabular}
\\~\\

(*) In the above list, note that the number $p^5$ is suppressed
from the GAP code number $G(p^5,i)$. For example, the group
$G(47)$ in the row corresponding to $p=13$ denotes the group
$G(13^5,47)$, while the group $G(47)$ in the row corresponding to
$p=17$ denotes the group $G(17^5,47)$.
\end{remark}

\bigskip

\end{document}